\documentclass[11pt, a4paper]{amsart}
\usepackage{a4,amsmath,amssymb,epic}
\usepackage{graphicx}
\usepackage{multicol}
\usepackage{rotating}
\usepackage{lscape}
\usepackage{amssymb,amsthm}
\usepackage{enumitem}
\usepackage[all]{xy}
\usepackage{tikz}
\usepackage{young}
\usepackage[vcentermath,enableskew]{youngtab}
\allowdisplaybreaks
\numberwithin{equation}{subsection}
\parskip=2pt

\theoremstyle{plain}
\newtheorem{lem}{Lemma}[section]
\newtheorem{prop}[lem]{Proposition}

\newtheorem{thm}[lem]{Theorem}
\newtheorem{cor}[lem]{Corollary}

\newtheorem*{lem*}{Lemma}
\newtheorem*{Acknowledgements*}{Acknowledgements}
\newtheorem*{prop*}{Proposition}
\newtheorem*{thm*}{Theorem}

\newtheorem*{cor*}{Corollary}
\newtheorem*{conj*}{Conjecture}
\theoremstyle{remark}

\newtheorem{rmk}[lem]{Remark}
\newtheorem{defn}[lem]{Definition}

\newtheorem{eg}[lem]{Example}

\newtheorem{rem}[lem]{Remark}

\newcommand{\Hom}{\operatorname{Hom}}

\newcommand{\End}{\operatorname{End}}

\newcommand{\ct}{\operatorname{ct}}
\newcommand{\ZZ}{{\mathbb Z}}

\newcommand{\GL}{\mathrm{GL}}

\newcommand{\Sp}{\textbf{\rm \textbf{S}}}

\newcommand{\minusone}{$-1$}

\hyphenation{tab-le-aux }

\parskip=2pt
\begin{document}
\title[The partition algebra and the Kronecker coefficients]{The partition algebra and the \\ Kronecker coefficients}
\author{C.~Bowman}
 \address{Institut de Math\'ematiques de Jussieu, 5 rue du Thomas Mann,
  75013, Paris, France} 
  \email{Bowman@math.jussieu.fr}
 \author{M.~De Visscher}
\address{Centre for Mathematical Science,
 City University London,
 Northampton Square,
 London,
 EC1V 0HB,
 England.}
 \email{Maud.Devisscher.1@city.ac.uk }
  \author{R.~Orellana}
\address{Department of Mathematics,
Dartmouth College,
6188 Kemeny Hall, 
Hanover, NH 03755, USA } 
\email{Rosa.C.Orellana@dartmouth.edu}
 
\subjclass[2000]{20C30} 
\date{\today}

 \maketitle
 
 \begin{abstract}
  
  We propose a new approach to study the Kronecker coefficients by using the Schur-Weyl duality between the symmetric group and the partition algebra.  We explain the limiting behavior  and associated bounds in the context of the partition algebra.  Our analysis leads to a uniform description of the Kronecker coefficients when one of the indexing partitions is a hook or a two-part partition. 
  
  
 \end{abstract}

\section*{Introduction}

A fundamental problem in the representation theory of the symmetric group is to describe the coefficients in the decomposition of the tensor product of two Specht modules.   These coefficients are known in the literature as the {\em Kronecker coefficients}.  They are labelled by triples of partitions.  Finding a formula or combinatorial interpretation for these coefficients has been described by Richard Stanley as `one of the main problems in the combinatorial representation theory of the symmetric group'.   This question has received the attention of Littlewood \cite{littlewood}, James \cite[Chapter 2.9]{jk}, Lascoux \cite{Lascoux},  Thibon \cite{Thibon}, Garsia and Remmel \cite{GR},  Kleshchev and Bessenrodt \cite{BK} amongst others and yet  a combinatorial solution has remained beyond reach for over a hundred years.   
 
Murnaghan discovered an amazing limiting phenomenon  satisfied by the Kronecker coefficients; as we increase the length of the first row of the indexing partitions the sequence of Kronecker coefficients obtained stabilises. The limits of these sequences are known as the {\em reduced Kronecker coefficients}. 

 The novel idea of this paper is to study the Kronecker and reduced coefficients through the Schur--Weyl duality between the symmetric group, $\mathfrak{S}_n$, and the partition algebra, $P_r(n)$.  The key observation being that the tensor product of Specht modules corresponds to the restriction of simple modules in $P_r(n)$ to a Young subalgebra.  The combinatorics underlying the representation theory of both objects is based on partitions.   The duality results in a Schur functor,  ${\rm F} : \mathfrak{S}_n{\text{-mod}}   \to  P_r(n){\text{-mod}}, $
which acts by \emph{first row removal} on the partitions labelling the simple modules.   
We exploit this functor along with the following three key facts concerning the representation theory of the partition algebra: (a) it is  semisimple for large $n$ (b) it has a stratification by symmetric groups (c) its non-semisimple representation theory is well developed.

We interpret the Kronecker and reduced Kronecker coefficients and the passage between them in terms of the representation theory of the partition algebra.  The limiting phenomenon discovered by Murnaghan and some associated bounds (due to Brion)  are then naturally explained by the fact that $P_r(n)$ is semisimple for large enough $n$.  

Closed formulas for Kronecker coefficients have only been obtained for triples of partitions with (i) one 2-part partition (ii) two hook partitions and (iii) a hook and a 2-part partition.
  We give a unified simple approach which covers all these cases and generalises further to triples  with one hook partition.


  Our approach brings  forward a general tool to study these coefficients and provides a natural framework for the study of the outstanding problems in the area.  In particular, one should notice that our proofs are surprisingly elementary. 
 
The paper is organised as follows. 
In Sections 1 and 2 we recall the combinatorics underlying the representation theories of the symmetric group and partition algebra.
  In Section 3 we show how to pass the Kronecker problem through Schur--Weyl duality and phrase it as a question concerning the partition algebra.  We then summarise  results concerning the Kronecker and reduced Kronecker coefficients that have a natural interpretation  in this setting. 
   Section 4 contains   a description of the restriction of a standard module for $P_r(n)$ to a Young subalgebra, giving a new representation theoretic interpretation of \cite[Lemma 2.1]{rosa}.   
In   Section 5 we specialise to hook and two-part partitions and obtain closed positive formulas in these cases.
   Section 6 contains an extended example.

%
%
  
\section{Symmetric group combinatorics}

The combinatorics underlying the representation theory of the symmetric group, $\mathfrak{S}_n$, is based on  partitions.  A \emph{partition} $\lambda$ of $n$, denoted $\lambda\vdash n$, is defined to be a weakly decreasing sequence $\lambda=(\lambda_1,\lambda_2,\dots,\lambda_\ell)$ of non-negative integers such that the sum $|\lambda|=\lambda_1+\lambda_2+\dots +\lambda_\ell$ equals $n$.  The {\em length} of a partition is the number of nonzero parts, we denote this by $\ell(\lambda)$.  We let $\Lambda_n$ denote the set of all partitions of $n$.

With a partition, $\lambda$, is associated its \emph{Young diagram}, which is the set of nodes
\[[\lambda]=\left\{(i,j)\in\ZZ_{>0}^2\ \left|\ j\leq \lambda_i\right.\right\}.\]
  Given a node  specified by $i,j\geq1$, we say the node has \emph{content}  $j-i$.  We let $\ct(\lambda_i)$ denote the content of the last node in the $i$th row of $[\lambda]$, that is $\ct(\lambda_i)=\lambda_i-i$.

Over the complex numbers, the irreducible \emph{Specht} modules, $\Sp(\lambda)$, of $\mathfrak{S}_n$ are indexed by the partitions, $\lambda$, of $ n$.  An explicit construction of these modules is given in \cite{jk}.


\subsection{The classical Littlewood--Richardson rule}
The Littlewood--Richardson rule is a combinatorial description of the restriction of a Specht module to a Young subgroup of the symmetric group.  
Through Schur--Weyl duality, the  rule also computes the decomposition of a tensor product of two simple modules of $\GL_n(\mathbb{C})$.  
The Littlewood--Richardson rule is the most famous algorithm for decomposing tensor products and has been generalised in several directions.

The following is a simple restatement of this rule as it appears in \cite[Section 2.8.13]{jk}.

\begin{thm}[The Littlewood--Richardson Rule]
 For $\lambda\vdash r_1$, $\mu\vdash r_2$ and $\nu \vdash r_1+r_2$,
\begin{align*}
\Sp(\nu) \!\!\downarrow_{\mathfrak{S}_{r_1} \times \mathfrak{S}_{r_2}}^{\mathfrak{S}_{r_1+r_2}} \cong \bigoplus_{ \lambda\vdash r_1, \mu\vdash r_2}c^\nu_{\lambda, \mu} \Sp(\lambda)\boxtimes\Sp(\mu)
\end{align*}
where the $c^\nu_{\lambda, \mu}$ are the Littlewood--Richardson coefficients (defined below).
\end{thm}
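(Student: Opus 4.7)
The plan is to pass to the ring $\Lambda$ of symmetric functions via the Frobenius characteristic map $\ch\colon \bigoplus_{n\ge 0} R(\mathfrak{S}_n)\to\Lambda$, which sends the Specht module $\Sp(\lambda)$ to the Schur function $s_\lambda$. The fundamental property of $\ch$ that does the work is that it converts induction from Young subgroups into ordinary multiplication in $\Lambda$: for $M\in R(\mathfrak{S}_{r_1})$ and $N\in R(\mathfrak{S}_{r_2})$,
\[
\ch\bigl(\mathrm{Ind}_{\mathfrak{S}_{r_1}\times\mathfrak{S}_{r_2}}^{\mathfrak{S}_{r_1+r_2}}(M\boxtimes N)\bigr) \;=\; \ch(M)\,\ch(N).
\]

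Assuming the coefficients $c^\nu_{\lambda,\mu}$ are defined (as is standard, and as presumably appears in the omitted paragraph) by the expansion $s_\lambda s_\mu = \sum_\nu c^\nu_{\lambda,\mu}\,s_\nu$, the restriction formula is then a direct application of Frobenius reciprocity. Concretely, I would proceed in three steps. First, apply $\ch$ to the induced module $\mathrm{Ind}_{\mathfrak{S}_{r_1}\times\mathfrak{S}_{r_2}}^{\mathfrak{S}_{r_1+r_2}}(\Sp(\lambda)\boxtimes\Sp(\mu))$ to obtain $s_\lambda s_\mu = \sum_\nu c^\nu_{\lambda,\mu}\,s_\nu$, so that the $\Sp(\nu)$-multiplicity of this induced module is exactly $c^\nu_{\lambda,\mu}$. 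Second, invoke Frobenius reciprocity to identify this multiplicity with the multiplicity of $\Sp(\lambda)\boxtimes\Sp(\mu)$ inside $\Sp(\nu)\!\!\downarrow_{\mathfrak{S}_{r_1}\times\mathfrak{S}_{r_2}}^{\mathfrak{S}_{r_1+r_2}}$. Third, sum over $\lambda\vdash r_1$ and $\mu\vdash r_2$ to recover the full decomposition, using semisimplicity of $\mathbb{C}[\mathfrak{S}_{r_1}\times\mathfrak{S}_{r_2}]$ to assemble the multiplicities into a direct sum.

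The main obstacle is really one of framing rather than computation. If the definition of $c^\nu_{\lambda,\mu}$ supplied below is the combinatorial one (counting semistandard skew tableaux of shape $\nu/\lambda$ and content $\mu$ whose reverse reading word is a lattice word), then the honest content is the symmetric function identity $s_\lambda s_\mu=\sum_\nu c^\nu_{\lambda,\mu}\,s_\nu$ itself. For this I would either quote the classical proof from \cite{jk}, or reduce to the Pieri rule by induction on the number of parts of $\mu$ and then identify the resulting coefficients with lattice skew tableaux via the RSK correspondence. Everything beyond this combinatorial input is formal, and in particular does not require any input from the partition algebra.
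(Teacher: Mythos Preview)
The paper does not actually prove this theorem: it is stated as a restatement of \cite[Section 2.8.13]{jk}, followed by the combinatorial description of the algorithm computing $c^\nu_{\lambda,\mu}$, with no argument given. So there is no ``paper's proof'' to compare against beyond the citation.

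Your outline is a correct and standard route. You rightly isolate that the only substantive step is the symmetric-function identity $s_\lambda s_\mu=\sum_\nu c^\nu_{\lambda,\mu}s_\nu$ with the combinatorial $c^\nu_{\lambda,\mu}$, and you propose (as one option) to quote exactly the same source \cite{jk} for it. In that sense your argument and the paper's treatment ultimately rest on the same input; you simply make explicit the formal layer---Frobenius characteristic turning induction into multiplication, and Frobenius reciprocity converting the induction statement into the restriction statement---that the paper leaves implicit. Your alternative suggestion (Pieri plus induction on the number of parts of $\mu$, identifying the answer via RSK/lattice words) is also a legitimate self-contained approach, though carrying it out carefully is essentially reproving the classical Littlewood--Richardson rule and is well beyond what the paper needs or attempts.
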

The Littlewood--Richardson coefficient $c^\nu_{\lambda, \mu}$ is zero, unless $\lambda \subseteq  \nu$, and, otherwise may be calculated as follows.
For each node $(i,j)$ of $\mu$, take a symbol $u_{i,j}$.  Begin with the diagram $\lambda$ and:
\begin{enumerate}[leftmargin=*,itemsep=0.1em]
\item Add to it all symbols $u_{1,j}$ (corresponding to the first row of nodes of $\mu$) in such a way as to produce the diagram of a partition and to satisfy (3).
\item Next add all symbols $u_{2,j}$ (corresponding to the second row of nodes of $\mu$) following the same rules.  Continue this process with all rows of $\mu$.
\item The added symbols must satisfy: $(a)$ for all $i$, if $y <j$, $u_{i,y}$ is in a later column than $u_{i,j}$; and $(b)$ for all $j$, if $x<i$ $u_{x,j}$ is in an earlier row than $u_{i,j}$.
\end{enumerate}

By transitivity of induction we have that the Littlewood--Richardson rule determines the structure of  the restriction of a Specht module to any Young subgroup.  Of particular importance in this paper is the three-part case
\begin{align*}
\Sp(\nu)\!\! \downarrow_{\mathfrak{S}_{r_1} \times \mathfrak{S}_{r_2}\times\mathfrak{S}_{r_3} }^{\mathfrak{S}_{r_1+r_2+r_3}}
 &\cong\bigoplus_{\begin{subarray}{c} {{\xi\vdash r_1+r_2} }
    \\   {{\nu\vdash r_3} } \end{subarray}}
 (  c^\nu_{\xi, \eta}\Sp(\xi)\boxtimes\Sp(\eta))\!\!\downarrow^{\mathfrak{S}_{r_1+r_2} \times \mathfrak{S}_{r_3}}_{\mathfrak{S}_{r_1}\times\mathfrak{S}_{r_2}\times\mathfrak{S}_{r_3}\ }\\
&
\cong\bigoplus_{\begin{subarray}{c} {{\lambda\vdash r_1, } }
    \\   {{\mu\vdash r_2 } }, {{\nu\vdash r_3} }   \end{subarray}}
\left( \sum_{\xi\vdash r_1+r_2}c^\xi_{\lambda,\mu} c^\nu_{\xi,\eta}\right)\Sp(\lambda)\boxtimes\Sp(\mu)\boxtimes\Sp(\eta).
\end{align*}
We therefore set $c^\nu_{\lambda,\mu,\eta}=\sum_{\xi}c^\xi_{\lambda,\mu} c^\nu_{\xi,\eta}$.


\subsection{Tensor products of Specht modules of the symmetric group}\label{murg} In this section we define the Kronecker coefficients and the {reduced Kronecker coefficients} as well as set some notation.  Let $\lambda$ and $\mu$ be two partitions of $n$, then 
\[\Sp(\lambda)\otimes \Sp(\mu) = \bigoplus_{\nu\vdash n} g_{\lambda,\mu}^\nu \Sp(\nu),\]
the coefficients $g_{\lambda , \mu}^\nu$ are known as the {\em Kronecker coefficients}.  These coefficients satisfy an amazing stability property illustrated in the following example. 
\begin{eg}
We have the following tensor products of Specht modules:
\begin{align*}  
\Sp(1^2) \otimes \Sp(1^2) &= \Sp(2) \\
\Sp(2,1) \otimes \Sp(2,1) &= \Sp(3) \oplus \Sp(2,1) \oplus \Sp(1^3) \\
\Sp(3,1) \otimes \Sp(3,1) &= \Sp(4) \oplus \Sp(3,1)\oplus \Sp(2,1^2) \oplus \Sp(2^2)  
\intertext{at which point the product stabilises, i.e. for all $n\geq4$, we have}
\Sp(n-1,1) \otimes \Sp(n-1,1) &= \Sp(n) \oplus \Sp(n-1,1)\oplus \Sp(n-2,1^2) \oplus \Sp(n-2,2).
\end{align*}
  \end{eg}
Let $\lambda = (\lambda_1,\lambda_2, \ldots,\lambda_{\ell} )$ be a partition and $n$ be an integer, define $\lambda_{[n]}=(n-|\lambda|, \lambda_1,\lambda_2, \ldots,\lambda_{\ell})$.  Note that all partitions of $n$ can be written in this form.

For $\lambda_{[n]}, \mu_{[n]}, \nu_{[n]} \in \Lambda_n$ we let 
$$g_{\lambda_{[n]},\mu_{[n]}}^{\nu_{[n]}} = \dim_{\mathbb{C}}(\Hom_{\mathfrak{S}_n}(\Sp(\lambda_{[n]}) \otimes \Sp(\mu_{[n]}) , \Sp(\nu_{[n]}))),$$
denote the multiplicity of $\Sp(\nu_{[n]})$ in the tensor product $\Sp(\lambda_{[n]}) \otimes \Sp(\mu_{[n]})$.
Murnaghan showed (see \cite{murn, murn2}) that if we allow the first parts of the partitions to increase in length then we obtain a limiting behaviour as follows.  For $\lambda_{[N]}, \mu_{[N]}, \nu_{[N]} \in \Lambda_N$ and $N$ sufficiently large we have that
$$g_{\lambda_{[N+k]},\mu_{[N+k]}}^{\nu_{[N+k]}}=\overline{g}_{\lambda,\mu}^\nu$$
for all $k\geq 1$; the integers $\overline{g}_{\lambda,\mu}^\nu$ are  called the \emph{reduced Kronecker coefficients}.
Bounds for this stability have been given in \cite{brion,vallejo,klyachko,rosa}.

\begin{rem}
The reduced Kronecker coefficients are also the structural constants for a linear basis for the polynomials in countably many variables known as the \emph{character polynomials}, see \cite{mac}.
\end{rem}

\section{The partition algebra}
The partition algebra was originally defined by Martin in \cite{marbook}.  All the results in this section are due to Martin and his collaborators, see \cite{mar1} and references therein.  
 \subsection{Definitions}
For $r\in \mathbb{Z}_{>0}$, $\delta\in\mathbb{C}$, we let $P_r(\delta)$ denote the complex vector space with basis given by  all set-partitions of  $\{1,2,\ldots, r, \bar{1},\bar{2}, \ldots, \bar{r}\}.$  
A part of a set-partition is called a \emph{block}.
For example,
$$d=\{\{1, 2, 4, \bar{2}, \bar{5}\}, \{3\}, \{5, 6, 7, \bar{3}, \bar{4}, \bar{6}, \bar{7}\}, \{8, \bar{8}\}, \{\bar{1}\}\},$$
is a set-partition (for $r=8$) with 5  blocks.

A  set-partition can be represented \emph{uniquely} by an $(r,r)$-\emph{partition diagram} consisting of a frame with $r$ distinguished points on the northern and southern boundaries, which we call vertices.  We number the northern vertices from left to right by $1,2,\ldots, r$ and the southern vertices similarly by $\bar{1},\bar{2},\ldots, \bar{r}$. 
Any block in a set-partition is of the form $A\cup B$ where $A=\{i_1<i_2<\ldots <i_p\}$ and $B=\{\bar{j_1}<\bar{j_2}<\ldots <\bar{j_q}\}$ (and $A$ or $B$ could be empty). 
We  draw this block by putting an arc joining each pair $(i_l, i_{l+1})$ and $(\bar{j_l}, \bar{j }_{l+1})$ and if $A$ and $B$ are non-empty we draw a strand from $i_1$ to $\bar{j_1}$, that is we draw a single propagating line on the leftmost vertices of the block.
Blocks containing a northern and a southern vertex will be called \emph{propagating blocks}; all other blocks will be called \emph{non-propagating blocks}.
For $d$ as in the example above, the {partition diagram} of $d$ is given by:
\[ 
\begin{tikzpicture}[scale=0.6]
  \draw (0,0) rectangle (8,3);
  \foreach \x in {0.5,1.5,...,7.5}
    {\fill (\x,3) circle (2pt);
     \fill (\x,0) circle (2pt);}
  \begin{scope}
    \draw (0.5,3) -- (1.5,0);
    \draw (7.5,3) -- (7.5,0);
    \draw (4.5,3) -- (2.5,0);
    \draw (0.5,3) arc (180:360:0.5 and 0.25);
    \draw (1.5,3) arc (180:360:1 and 0.25);
    \draw (4.5,0) arc (0:180:1.5 and 1);
    \draw (5.5,0) arc (0:180:1 and .7);
    \draw (3.5,0) arc (0:180:.5 and .25);
    \draw (6.5,0) arc (0:180:0.5 and 0.5);
    \draw (4.5,3) arc (180:360:0.5 and 0.25);
    \draw (5.5,3) arc (180:360:0.5 and 0.25);
  \end{scope}
\end{tikzpicture}
\]

We can generalise this definition to $(r,m)$-partition diagrams as diagrams representing set-partitions of $\{1, \ldots , r, \bar{1}, \ldots, \bar{m}\}$ in the obvious way.

We define the product $x \cdot y$ of two diagrams $x$
and $y$ using the concatenation of $x$ above $y$, where we identify
the southern vertices of $x$ with the northern vertices of $y$.   
If there are $t$ connected components consisting only of  middle vertices, then the product is set equal to $\delta^t$ times the diagram  
with the middle components removed. Extending this by linearity defines a multiplication on $P_r(\delta)$.

\noindent \textbf{Assumption:} \emph{We assume throughout the paper that   $\delta\neq 0$.  }

\noindent  The following elements of the partition algebra will be of importance.
 \begin{align*}
s_{i,j}=\begin{minipage}{34mm}\begin{tikzpicture}[scale=0.5]
  \draw (0,0) rectangle (6,3);
  \foreach \x in {0.5,1.5,...,5.5}
    {\fill (\x,3) circle (2pt);
     \fill (\x,0) circle (2pt);}
    \draw (4.5,3.5) node {$j$};
    \draw (4.5,-0.5) node {$\overline{j}$};
    \draw (1.5,3.5) node {$i$};
    \draw (1.5,-0.5) node {$\overline{i}$};
  \begin{scope}
    \draw (0.5,3) -- (0.5,0);
    \draw (5.5,3) -- (5.5,0);
        \draw (1.5,3) -- (4.5,0);
            \draw (4.5,3) -- (1.5,0);
    \draw (2.5,3) -- (2.5,0);
    \draw (3.5,3) -- (3.5,0);
   \end{scope}
\end{tikzpicture}\end{minipage} \quad
e_{l}= \frac{1}{\delta}\begin{minipage}{34mm} \ \begin{tikzpicture}[scale=0.5]
  \draw (0,0) rectangle (6,3);
  \foreach \x in {0.5,1.5,...,5.5}
    {\fill (\x,3) circle (2pt);
     \fill (\x,0) circle (2pt);}
         \draw (2.5,3.5) node {$l$};
              \draw (2.5,-.5) node {$\overline{l}$};
  \begin{scope}
    \draw (0.5,3) -- (0.5,0);
         \draw (3.5,0) arc (0:180:.5 and .5);
        \draw (4.5,0) arc (0:180:.5 and .5);
        \draw (5.5,0) arc (0:180:.5 and .5);
       \draw (4.5,3) arc (180:360:0.5 and 0.5);
       \draw (3.5,3) arc (180:360:0.5 and 0.5);
              \draw (2.5,3) arc (180:360:0.5 and 0.5);
        \draw (1.5,3) -- (1.5,0);
           \end{scope}
\end{tikzpicture}\end{minipage}
\end{align*}

\noindent In particular, note that   $e_r$ is the idempotent corresponding to the set-partition $\{1,\bar{1}\}\{2,\bar{2}\}\cdots \{r-1,\overline{r-1}\}\{r\}\{\bar{r}\}$. 

\subsection{Filtration by propagating blocks and standard modules}
Fix $\delta\in\mathbb{C}^{\times}$ and write $P_r=P_r(\delta)$. Note that the multiplication in $P_r$ cannot increase the number of propagating blocks.  More precisely, if $x$,  respectively $y$, is a partition diagram with $p_x$, respectively $p_y$, propagating blocks then $x \cdot y$ is equal to $\delta^t z$ for some $t\geq0$ and some partition diagram $z$ with $p_z\leq \min\{p_x, p_y\}$.  This gives a filtration of the algebra $P_r$ by the number of propagating blocks.   This filtration can be realised using the idempotents $e_l$.  We have  
$$\mathbb{C}\cong P_r e_1 P_r \subset \ldots \subset P_re_{r-1}P_r \subset P_re_rP_r \subset P_r.				$$
It is easy to see that
\begin{equation}\label{bob}
e_rP_r e_r \cong P_{r-1} \end{equation}
and that this generalises to $P_{r-l}\cong e_{r-l+1}P_re_{r-l+1}$ for $1\leq l\leq r$. We also have
\begin{equation}\label{bob2} P_r /(P_r e_rP_r )\cong \mathbb{C}\mathfrak{S}_r.\end{equation}
Using equation (\ref{bob2}), we get that any $\mathbb{C}\mathfrak{S}_r$-module can be \emph{inflated} to a $P_r$-module.  We also get from equations (\ref{bob}) and (\ref{bob2}), by induction, that the simple $P_r$-modules are indexed by the set $\Lambda_{\leq r}=\bigcup_{0\leq i\leq r}\Lambda_i$.  

For any $\nu \in \Lambda_{\leq r}$ with $\nu \vdash r-l$, we define a $P_r$-module, $\Delta_r(\nu)$, by
$$\Delta_r(\nu)=P_re_{r-l+1} \otimes_{P_{r-l}}\Sp(\nu).$$
(Here we have identified $P_{r-l}$ with $e_{r-l+1}P_re_{r-l+1}$ using the isomorphism given in equation (\ref{bob}).)  

For $\delta \not\in \{0,1,\ldots, 2r-2\}$ the algebra $P_r(\delta)$ is semisimple and the set $\{\Delta_r(\nu):\nu \in \Lambda_{\leq r}\}$ forms a complete set of non-isomorphic simple modules.  

In general, the algebra $P_r(\delta)$ is quasi-hereditary   with respect to the partial order on $\Lambda_{\leq r}$ given by $\lambda < \mu$ if $|\lambda| > |\mu|$ (see \cite{mar1}).  The modules $\Delta_r(\nu)$ are the standard modules, each of which has a simple head $L_r(\nu)$, and the set  $\{L_r(\nu): \nu\in \Lambda_{\leq r}\}$ forms a complete set of non-isomorphic simple modules.
 
We now give an explicit description of the standard modules. We set $V(r,r-l)$ to be the span of all $(r,r-l)$-partition diagrams having precisely $(r-l)$ propagating blocks. This has a natural structure of a $(P_r(\delta), \mathfrak{S}_{r-l})$-bimodule. It is easy to see that, as vector spaces, we have
$$\Delta_r(\nu) \cong V(r,r-l)\otimes_{\mathfrak{S}_{r-l}} \Sp(\nu).$$ 
The action of $P_r(\delta)$ is given as follows. Let $v$ be a partition diagram in $V(r,r-l)$, $x\in \Sp(\nu)$ and $X$ be an $(r,r)$-partition diagram. Concatenate $X$ and $v$ to get $\delta^t v'$ for some $(r,r-l)$-partition diagram $v'$ and some non-negative integer $t$. If $v'$ has fewer than $(r-l)$ propagating blocks then we set
$X(v\otimes x)=0$.
Otherwise we set $X(v\otimes x)=\delta^t v'\otimes x$. Note that in this case we have $v'=v''\sigma$ for a unique $v''\in V(r,r-l)$ with non-crossing propagating lines and a unique $\sigma\in \mathfrak{S}_{r-l}$ and we have $\delta^{t}v'\otimes x = \delta^tv''\otimes \sigma x$.

\subsection{Non-semisimple representation theory of the partition algebra}\label{comb}
We assume that $\delta=n\in\mathbb{Z}_{>0}$ (as otherwise the algebra is semisimple).

\begin{defn}Let $\mu\subset \lambda$ be partitions. 
We say that $(\mu,\lambda)$ is an $n$-pair, and write $\mu\hookrightarrow_n \lambda$, if the  Young diagram of $\lambda$ differs from the  Young diagram of $\mu$ by a horizontal row of boxes of which the last (rightmost) one has content $n-|\mu|$.
\end{defn}
\begin{eg}
For example, $((2,1), (4,1))$ is a $6$-pair.  We have that $6-|\mu|=3$ and the  Young diagrams (with contents) are as follows:
$$\young(01,\minusone) \ \subset \ \young(0123,\minusone)$$
note that they differ by $\young(23)$.
\end{eg}
Recall that the set of simple (or standard) modules for $P_r(n)$ are labelled by the set $\Lambda_{\leq r}$. This set splits into $P_r(n)$-blocks. The set of labels in each block forms a maximal chain of $n$-pairs
$$\lambda^{(0)}\hookrightarrow_n \lambda^{(1)} \hookrightarrow_n \lambda^{(2)} \hookrightarrow_n  \ldots \hookrightarrow_n \lambda^{(t)}.$$
Moreover, for $1\leq i\leq t$ we have that $\lambda^{(i)}/\lambda^{(i-1)}$ consists of a strip of boxes in the $i$th row.
Now we have an exact sequence of $P_r(n)$-modules
$$0\rightarrow \Delta_r( \lambda^{(t)})\rightarrow \ldots \rightarrow \Delta_r(\lambda^{(2)})\rightarrow \Delta_r(\lambda^{(1)})\rightarrow \Delta_r(\lambda^{(0)})\rightarrow L_r(\lambda^{(0)})\rightarrow 0$$
with the image of each homomorphism being simple. Each standard module $\Delta_r(\lambda^{(i)})$ (for $0\leq i \leq t-1$) has Loewy structure
$$\begin{array}{c} L_r(\lambda^{(i)})\\ L_r(\lambda^{(i+1)}) \end{array}$$
and so in the Grothendieck group we have
\begin{equation}\label{alt}
[L_r(\lambda^{(i)})]=\sum_{j=i}^{t} (-1)^{j-i} [\Delta_r(\lambda^{(j)})].
\end{equation}
Note that each block is totally ordered by the size of the partitions.

 \begin{prop}\label{singular}
Let $\nu\in \Lambda_{\leq r}$ and assume that $\nu_{[n]}$ is a partition. Then we have that
(i) $\nu$ is the minimal element in its $P_r(n)$-block, and \\
(ii) $\nu$ is the unique element in its block if and only if $n+1-\nu_1 > r$.
 \end{prop}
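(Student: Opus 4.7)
The plan is to translate both claims into statements about the $n$-pair relation at $\nu$, exploiting the fact that $\nu_{[n]} = (n-|\nu|,\nu_1,\nu_2,\ldots)$ being a partition is equivalent to the single inequality $n-|\nu| \ge \nu_1$. For part~(i), I want to show $\nu$ admits no $n$-predecessor, which (by the block description recalled above) forces $\nu$ to occupy the smallest-size end of its chain, i.e.\ to be the $\lambda^{(0)}$ of its block. Suppose for contradiction that $\mu \hookrightarrow_n \nu$; then $\nu/\mu$ is a horizontal strip in some row $k$, and its rightmost box $(k,\nu_k)$ satisfies $\nu_k - k = n - |\mu|$. Since $\mu \subsetneq \nu$, we have $n - |\mu| > n - |\nu| \ge \nu_1 \ge \nu_k$, whence $\nu_k - k > \nu_k$, forcing $k < 0$---a contradiction.

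For part~(ii), by~(i) $\nu$ is the unique element of its block precisely when it also admits no $n$-successor. Suppose $\nu \hookrightarrow_n \lambda$ has its strip in row $k$; the content condition reads $\lambda_k = n - |\nu| + k$. When $k \ge 2$, the partition inequality $\lambda_k \le \lambda_{k-1} = \nu_{k-1} \le \nu_1$ combined with the hypothesis $n - |\nu| \ge \nu_1$ yields $\nu_1 \le n - |\nu| \le \nu_1 - k < \nu_1$, which is impossible. Hence $k = 1$ is forced, and the successor is uniquely determined as $\lambda = (n - |\nu| + 1, \nu_2, \ldots, \nu_\ell)$, of total size $n + 1 - \nu_1$. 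This $\lambda$ is an admissible successor inside $P_r(n)$---that is, lies in $\Lambda_{\le r}$---if and only if $n + 1 - \nu_1 \le r$. Negating gives exactly the criterion of~(ii).

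The whole argument is elementary Young-diagram combinatorics, so I do not anticipate any genuine obstacle. The only step that needs real care is the row-by-row case analysis together with the content arithmetic: in particular, remembering that when the added or removed horizontal strip sits in row $k \ge 2$ the new entry $\lambda_k$ is constrained from above by $\lambda_{k-1} = \nu_{k-1} \le \nu_1$, which is precisely the ingredient that kills the $k \ge 2$ case in both directions.
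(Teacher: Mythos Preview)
Your proof is correct.  The paper argues part~(i) slightly differently: rather than ruling out an $n$-predecessor, it directly exhibits the row-$1$ $n$-successor of $\nu$ (since $\ct(\nu_1)=\nu_1-1\le n-|\nu|-1$, there is room to extend the first row until the last box has content $n-|\nu|$), and then appeals to the quoted block description---that $\nu^{(i+1)}/\nu^{(i)}$ lies in row $i+1$---to conclude $\nu=\nu^{(0)}$.  Having that successor $\nu^{(1)}$ already in hand, the paper then obtains~(ii) immediately by reading off $|\nu^{(1)}|=n+1-\nu_1$, without your separate elimination of the $k\ge 2$ case.  Your route is mildly more self-contained because it does not lean on the ``strip in row $i$'' feature of the chain, but the underlying content arithmetic is identical.
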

\begin{proof}
(i) Observe that for $\nu_{[n]}$ to be a partition we must have $n-|\nu|\geq \nu_1$. This implies that $\ct(\nu_1)=\nu_1 -1\leq n-|\nu| -1$. So we have $\nu \hookrightarrow_n \mu$ for some partition $\mu$ with $\mu/\nu$ being a single strip in the first row. Thus we have $\nu=\nu^{(0)}$ and $\mu=\nu^{(1)}$.\\
(ii) Now as $\nu^{(1)}/\nu$ is a single strip in the first row with last box having content $n-|\nu|$, we have that $|\nu^{(1)}/\nu|=n-|\nu|+1-\nu_1$ and thus $|\nu^{(1)}|=n+1-\nu_1$. Thus if $n+1-\nu_1>r$ then $\nu^{(1)}\notin \Lambda_{\leq r}$ and we have that $\nu$ is the only partition in its $P_r(n)$-block.
\end{proof}


  \section{Schur--Weyl duality } 
 Classical Schur--Weyl duality is the relationship between the general linear and symmetric groups over tensor space.  To be more specific, let $V_n$ be an $n$-dimensional complex vector space and let $V^{\otimes r}_n$ denote its $r$th tensor power.

We have that the symmetric group $\mathfrak{S}_r$ acts on the right by permuting the factors.  The general linear group, $\GL_n$, acts on the left by matrix multiplication on each factor.  These two actions commute and moreover $\GL_n$ and $\mathfrak{S}_r$ are full mutual centralisers in $\End(V^{\otimes r}_n)$.  

The partition algebra, $P_r(n)$, plays the role of the symmetric group, $\mathfrak{S}_r$, when we restrict the action of $\GL_n$ to the subgroup of permutation matrices, $\mathfrak{S}_n$.

\subsection{Schur-Weyl duality between $\mathfrak{S}_n$ and $P_r(n)$} 

 Let $V_n$ denote an $n$-dimensional complex space.  Then $\mathfrak{S}_n$ acts on $V_n$ via the permutation matrices. 
\begin{equation}
\label{Vaction}
\sigma\cdot v_i = v_{\sigma(i)}\qquad \mbox{ for } \sigma \in \mathfrak{S}_n.
\end{equation}
Notice that we are simply restricting the $\GL_n$ action in the classical Schur-Weyl duality to the permutation matrices.  Thus, $\mathfrak{S}_n$ acts diagonally on the basis of simple tensors of $V_n^{\otimes r}$ as follows
\[\sigma\cdot (v_{i_1}\otimes v_{i_2} \otimes \cdots \otimes v_{i_r}) = v_{\sigma(i_1)}\otimes v_{\sigma(i_2)} \otimes \cdots \otimes v_{\sigma(i_r)}.\]

\medskip
For each $(r,r)$-partition diagram $d$ and each integer sequence $i_1\ldots, i_r, i_{\bar 1}, \ldots, i_{\bar r}$ with $1\leq i_j, i_{\bar j}\leq n$, define
\begin{equation}
\label{eq:diagrammatrix}
\phi_{r,n}(d)_{i_{\bar 1}, \ldots, i_{\bar r}}^{i_1,\ldots, i_r} = \begin{cases} 1 & \mbox{ if $i_t= i_s$ whenever vertices $t$ and $s$ are connected in $d$} \\ 0 & \mbox{ otherwise.}
\end{cases}
\end{equation}
A partition diagram $d\in P_r(n)$ acts on the basis of simple tensors of $V_n^{\otimes r}$ as follows
\[\Phi_{r,n}(d) (v_{i_{1}} \otimes v_{i_{2}} \otimes \cdots \otimes v_{i_{r}}) = \sum_{i_{\bar 1}, \ldots i_{\bar r}} \phi_{r,n}(d)_{i_{\bar 1}, \ldots, i_{\bar r}}^{i_1,\ldots, i_r} v_{i_{\bar 1}} \otimes v_{i_{\bar 2}} \otimes \cdots \otimes v_{i_{\bar r}}.\]

\begin{thm}[Jones \cite{Jones}] \label{SWduality}
$\mathfrak{S}_n$ and $P_r(n)$ generate the full centralisers of each other in $\End(V_n^{\otimes r})$.  
\begin{enumerate}
\item[(a)] $P_r(n)$ generates $\End_{\mathfrak{S}_n}(V_n^{\otimes r})$, and when $n\geq 2r$,  $P_r(n) \cong \End_{\mathfrak{S}_n}(V_n^{\otimes r})$. 
\item[(b)] $\mathfrak{S}_n$ generates $\End_{\mathfrak{S}_n}(V_n^{\otimes r})$.

\end{enumerate}
\end{thm}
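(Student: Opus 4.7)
The plan is to exploit that an endomorphism of $V_n^{\otimes r}$ which commutes with $\mathfrak{S}_n$ is determined by its values on $\mathfrak{S}_n$-orbits of basis pairs, and that these orbits are naturally indexed by set-partitions of $\{1,\ldots,r,\bar 1,\ldots,\bar r\}$ with at most $n$ blocks. First I would check directly from the formulas \eqref{Vaction} and \eqref{eq:diagrammatrix} that $\Phi_{r,n}(d)$ commutes with the diagonal $\mathfrak{S}_n$-action for every partition diagram $d$, and that $\Phi_{r,n}$ is an algebra homomorphism (concatenation of diagrams exactly reproduces matrix multiplication, with each closed middle loop contributing a factor of $n$ from the trace of the identity on $V_n$). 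This shows $\im \Phi_{r,n} \subseteq \End_{\mathfrak{S}_n}(V_n^{\otimes r})$.

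Next I would compute $\End_{\mathfrak{S}_n}(V_n^{\otimes r})$ directly. A basis of $\End(V_n^{\otimes r})$ is indexed by pairs of multi-indices $((i_1,\ldots,i_r),(j_1,\ldots,j_r)) \in \{1,\ldots,n\}^{2r}$, and $T\in\End(V_n^{\otimes r})$ lies in $\End_{\mathfrak{S}_n}(V_n^{\otimes r})$ if and only if its matrix coefficients are constant on $\mathfrak{S}_n$-orbits. These orbits are in bijection with the set-partitions of $\{1,\ldots,r,\bar 1,\ldots,\bar r\}$ having at most $n$ blocks (two indices lie in the same block exactly when their coordinate values are equal). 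Thus a basis of $\End_{\mathfrak{S}_n}(V_n^{\otimes r})$ is given by the orbit-indicator functions $\{\chi_d : d \text{ has } \le n \text{ blocks}\}$.

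The crux is then to relate $\Phi_{r,n}(d)$ to $\chi_d$. Observe that $\phi_{r,n}(d)^{i_1,\ldots,i_r}_{i_{\bar 1},\ldots,i_{\bar r}}=1$ whenever the indices agree on every block of $d$, but makes no requirement that they differ on distinct blocks. Hence
\[ \Phi_{r,n}(d) \;=\; \sum_{d' \preceq d} \chi_{d'},\]
where $d' \preceq d$ means that $d'$ is obtained from $d$ by merging some blocks. This transition is upper-unitriangular with respect to the partial order on set-partitions, so it is invertible by Möbius inversion, proving that the set $\{\Phi_{r,n}(d) : d \text{ has } \le n \text{ blocks}\}$ is also a basis of $\End_{\mathfrak{S}_n}(V_n^{\otimes r})$. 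This yields part (a): $\Phi_{r,n}$ is surjective for all $n$, and when $n\ge 2r$ every set-partition of the $2r$ vertices has at most $2r\le n$ blocks, so no basis element of $P_r(n)$ is redundant and $\Phi_{r,n}$ is an isomorphism.

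For part (b), I would apply the double centraliser theorem. Since $\mathbb{C}\mathfrak{S}_n$ is semisimple and acts on $V_n^{\otimes r}$, its double centraliser equals its image; combined with part (a), which identifies the image of $P_r(n)$ with $\End_{\mathfrak{S}_n}(V_n^{\otimes r})$, we conclude that the image of $\mathfrak{S}_n$ coincides with $\End_{P_r(n)}(V_n^{\otimes r})$. The only mildly delicate step is the triangularity argument with the Möbius inversion over set-partitions, and correctly identifying which set-partitions are allowed when $n<2r$; the rest is direct computation and the standard double centraliser theorem.
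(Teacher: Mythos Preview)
The paper does not give its own proof of this theorem; it is stated as a citation to Jones \cite{Jones} (with further references to \cite{mar1} and \cite{HalvRam}), so there is no argument in the paper to compare against. Your outline is the standard proof of Schur--Weyl duality for the partition algebra: identifying $\End_{\mathfrak{S}_n}(V_n^{\otimes r})$ with the span of orbit indicators indexed by set-partitions having at most $n$ blocks, showing $\Phi_{r,n}(d)=\sum_{d'\preceq d}\chi_{d'}$ is unitriangular and hence invertible by M\"obius inversion, and then invoking the double centraliser theorem for the semisimple $\mathbb{C}\mathfrak{S}_n$-action. This is essentially the argument given in \cite{HalvRam}, and it is correct. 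One small note: the statement of part (b) in the paper has an evident typo (it should read $\End_{P_r(n)}(V_n^{\otimes r})$ rather than $\End_{\mathfrak{S}_n}(V_n^{\otimes r})$), and you have implicitly read it the intended way.
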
 
 
\noindent We will denote $E_r(n)=\End_{\mathfrak{S}_n}(V_n^{\otimes r})$.

\begin{thm}[\cite{mar1} see also \cite{HalvRam}]We have a decomposition of $V_n^{\otimes r}$ as a $(\mathfrak{S}_n,P_r(n))$-bimodule
$$V_n^{\otimes r} = \bigoplus   \Sp(\lambda_{[n]})\otimes  L_r(\lambda)$$
where the sum is over all  partitions $\lambda_{[n]}$ of $n$ such that $|\lambda| \leq r$. 
\end{thm}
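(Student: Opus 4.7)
The plan is to derive the decomposition from Jones' duality (Theorem~\ref{SWduality}) via the classical double centralizer theorem. Since $\mathbb{C}\mathfrak{S}_n$ is semisimple and its centralizer in $\End(V_n^{\otimes r})$ is the semisimple image $E_r(n)$, which by Theorem~\ref{SWduality} in turn has the image of $\mathbb{C}\mathfrak{S}_n$ as its centralizer, the double centralizer theorem yields a $(\mathfrak{S}_n,E_r(n))$-bimodule decomposition
$$V_n^{\otimes r} \cong \bigoplus_{\mu} \Sp(\mu) \otimes N_\mu,$$
where $\mu$ runs over the partitions of $n$ such that $\Sp(\mu)$ is a constituent of $V_n^{\otimes r}$ and $N_\mu := \Hom_{\mathfrak{S}_n}(\Sp(\mu), V_n^{\otimes r})$ is a simple $E_r(n)$-module, these being pairwise non-isomorphic. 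Pulling back along the surjection $P_r(n)\twoheadrightarrow E_r(n)$ from Theorem~\ref{SWduality}(a), each $N_\mu$ becomes a simple $P_r(n)$-module, which is thus of the form $L_r(\lambda_\mu)$ for a unique $\lambda_\mu \in \Lambda_{\leq r}$.

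The next step is to identify which $\mu$ appear. The key point is that $V_n^{\otimes r}$ is a $\mathfrak{S}_n$-permutation module: the monomial basis is permuted with orbits indexed by set partitions of $\{1,\ldots,r\}$ into at most $n$ blocks, and an orbit coming from a set partition with $k$ blocks contributes a summand isomorphic to $\mathrm{Ind}_{\mathfrak{S}_{n-k}}^{\mathfrak{S}_n} \mathbb{C}$ with $k \leq r$. Young's rule then forces every $\Sp(\mu)$ appearing in such a summand to satisfy $\mu_1 \geq n-k \geq n-r$, equivalently $\mu = \lambda_{[n]}$ for some $\lambda$ with $|\lambda| \leq r$; the converse (that each such $\Sp(\lambda_{[n]})$ really appears) follows from a character or dimension check. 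Thus the indexing set on the right-hand side of the theorem is correct.

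It remains to identify the bijection $\mu \mapsto \lambda_\mu$: concretely, to show that when $\mu = \lambda_{[n]}$ for $\lambda \in \Lambda_{\leq r}$ one has $\lambda_\mu = \lambda$. I would first settle the semisimple range $n\geq 2r$: then $P_r(n)\cong E_r(n)$, every standard module equals its simple head, and using the explicit description $\Delta_r(\lambda) = V(r,r-l)\otimes_{\mathfrak{S}_{r-l}}\Sp(\lambda)$ together with a dimension count against $\dim V_n^{\otimes r} = n^r$ pins down the labelling. For arbitrary $n$, I would propagate this labelling by noting that the standard modules and their simple heads are defined uniformly in $\delta$ via $(r,r-l)$-partition diagrams, and construct an explicit nonzero $P_r(n)$-equivariant map $\Delta_r(\lambda) \to N_{\lambda_{[n]}}$ whose image, being a nonzero quotient of $\Delta_r(\lambda)$, must coincide with $L_r(\lambda)$. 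This last step is the main obstacle: the combinatorial labels $\lambda \in \Lambda_{\leq r}$ on the $P_r(n)$-side must be shown to be matched with the labels $\lambda_{[n]}$ on the $\mathfrak{S}_n$-side in a way compatible with the bimodule structure, and this genuinely uses the description of $V(r,r-l)$ as a $(P_r(n),\mathfrak{S}_{r-l})$-bimodule rather than mere multiplicity data.
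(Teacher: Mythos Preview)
The paper does not give its own proof of this theorem: it is quoted as a result from \cite{mar1} and \cite{HalvRam}. Your outline via the double centraliser theorem is exactly the standard route taken in those references, and the first two steps (the bimodule decomposition from Theorem~\ref{SWduality} and the identification of the $\mathfrak{S}_n$-constituents via the permutation-module structure of $V_n^{\otimes r}$ together with Young's rule) are correct and complete as stated.

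The only place your sketch is genuinely thin is the label matching, and you flag this yourself. A single dimension identity $\sum_\lambda \dim\Sp(\lambda_{[n]})\cdot\dim\Delta_r(\lambda)=n^r$ does \emph{not} pin down the bijection $\lambda_{[n]}\leftrightarrow L_r(\lambda)$: it is one equation, not a matching. In the cited sources the labelling is fixed inductively using exactly the structure the paper records in (\ref{bob}) and (\ref{bob2}): the idempotent $e_r$ implements $e_rV_n^{\otimes r}\cong V_n^{\otimes(r-1)}$ on the partition-algebra side, while on the $\mathfrak{S}_n$-side passing from $r-1$ to $r$ is tensoring with $V_n\cong\Sp(n)\oplus\Sp(n-1,1)$; comparing the resulting branching rules for $P_{r-1}\subset P_r$ and for $-\otimes V_n$ forces $N_{\lambda_{[n]}}\cong L_r(\lambda)$. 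Your proposed alternative of writing down an explicit nonzero $P_r(n)$-map $\Delta_r(\lambda)\to\Hom_{\mathfrak{S}_n}(\Sp(\lambda_{[n]}),V_n^{\otimes r})$ also works and is essentially what Martin does, but it requires actually exhibiting the map, not just asserting its existence. Either way, this step needs more than you have written; the rest is fine.
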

 
Using \cite[Theorem 9.2.2]{goodwall} we have, for $\lambda_{[n]}, \mu_{[n]},\nu_{[n]} \vdash n$ with $\lambda \vdash r$ and $\mu \vdash s$,
\begin{equation}\label{gw}\Hom_{\mathfrak{S}_n}(\Sp(\nu_{[n]}), \Sp(\lambda_{[n]})\otimes \Sp(\mu_{[n]}))\end{equation}
\begin{eqnarray*} 
 && \cong  \left\{ 
\begin{array}{ll} \Hom_{E_r(n)\otimes E_s(n)}(L_r(\lambda) \boxtimes L_s(\mu), L_{r+s}(\nu)\!\downarrow_{E_r(n)\otimes E_s(n)}) & \mbox{if $\nu \in \Lambda_{\leq r+s}$}\\
0 & \mbox{otherwise.}
\end{array}\right.
\end{eqnarray*}

\subsection{Kronecker product via the partition algebra}\label{3.2}
Going back to the formula in (\ref{gw}) we need to consider $L_{r+s}(\nu)\!\downarrow_{E_r(n)\otimes E_s(n)}$. Now $L_{r+s}(\nu )$ is a simple $P_{r+s}(n)$-module annihilated by $\ker \Phi_{r+s,n}$ and hence also by $\ker \Phi_{r,n} \otimes \ker \Phi_{s,n}$. Thus $L_{r+s}(\nu )\!\!\downarrow_{P_r(s)\otimes P_s(n)}$ is semisimple and has the same simple factors as $L_{r+s}(\nu)\!\!\downarrow_{E_r(s)\otimes E_s(n)}$.

Now combining (\ref{gw}) with (\ref{alt}) we have the following result.

\begin{thm}\label{transfer}
Let $\lambda_{[n]}, \mu_{[n]}, \nu_{[n]}\vdash n$ with $\lambda \vdash r$ and $\mu \vdash s$. Then we have
$$
{g}_{\lambda_{[n]} , \mu_{[n]} }^{\nu_{[n]}} =   
\left\{
\begin{array}{ll} \sum\limits_{i=0}^t (-1)^i [\Delta_{r+s}(\nu^{(i)})\!\downarrow_{P_r(n)\otimes P_s(n)}:L_r(\lambda)\boxtimes L_s(\mu)] & \mbox{if $\nu\in \Lambda_{\leq(r+s)}$}\\ 0 & \mbox{otherwise} \end{array} 
\right.
$$
where $\nu  = \nu^{(0)} \hookrightarrow_n \nu^{(1)} \hookrightarrow_n \ldots \hookrightarrow \nu^{(t)}$ is the $P_{r+s}(n)$-block of $\nu$. 
\end{thm}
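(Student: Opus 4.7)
The plan is to simply assemble the ingredients that have already been collected in Sections 3.1 and 3.2 together with the alternating sum identity \eqref{alt} coming from the Loewy structure of standard modules. Starting from the definition, I have $g^{\nu_{[n]}}_{\lambda_{[n]},\mu_{[n]}}=\dim\Hom_{\mathfrak{S}_n}(\Sp(\nu_{[n]}),\Sp(\lambda_{[n]})\otimes\Sp(\mu_{[n]}))$. Equation \eqref{gw} already rewrites this as a $\Hom$-space over $E_r(n)\otimes E_s(n)$, and deals with the vanishing case $\nu\notin\Lambda_{\leq r+s}$ on the nose; so that half of the statement is for free.

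Next I want to pass from endomorphism algebras to partition algebras. As noted in the paragraph opening Section 3.2, since $\ker\Phi_{r+s,n}\supseteq\ker\Phi_{r,n}\otimes\ker\Phi_{s,n}$, the restriction $L_{r+s}(\nu)\!\downarrow_{P_r(n)\otimes P_s(n)}$ is semisimple and has the same simple composition factors as $L_{r+s}(\nu)\!\downarrow_{E_r(n)\otimes E_s(n)}$ (the simple $E\otimes E$-modules being exactly the simple $P\otimes P$-modules on which the kernels act trivially). Semisimplicity then converts the $\Hom$-dimension into a composition multiplicity, giving
\[
g^{\nu_{[n]}}_{\lambda_{[n]},\mu_{[n]}}=[L_{r+s}(\nu)\!\downarrow_{P_r(n)\otimes P_s(n)}:L_r(\lambda)\boxtimes L_s(\mu)].
\]

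Now I invoke Proposition \ref{singular}(i): since $\nu_{[n]}$ is a partition, $\nu$ is the minimal element of its $P_{r+s}(n)$-block, so $\nu=\nu^{(0)}$. The alternating sum formula \eqref{alt}, applied at $i=0$ inside the Grothendieck group of $P_{r+s}(n)$, then yields
\[
[L_{r+s}(\nu)]=\sum_{j=0}^{t}(-1)^{j}[\Delta_{r+s}(\nu^{(j)})].
\]
Since restriction to $P_r(n)\otimes P_s(n)$ is exact it induces a homomorphism on Grothendieck groups, and reading off the multiplicity of the (semisimple) factor $L_r(\lambda)\boxtimes L_s(\mu)$ on both sides gives precisely the claimed alternating sum.

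There is no real obstacle here: the genuine content has been packaged earlier (Schur--Weyl duality, the semisimplicity argument that transfers between $E_r(n)\otimes E_s(n)$ and $P_r(n)\otimes P_s(n)$, and the quasi-hereditary Loewy structure leading to \eqref{alt}). The only point at which one must be a little careful is the semisimplicity step; if one tried instead to expand the standard modules $\Delta_{r+s}(\nu^{(i)})\!\downarrow$ directly into simples one would produce the same alternating sum, but one should verify that the cancellations really occur at the level of restrictions, which is ensured by the exactness of $(-)\!\downarrow_{P_r(n)\otimes P_s(n)}$.
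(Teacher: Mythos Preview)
Your argument is correct and is exactly the assembly the paper has in mind: the paper's own proof is the single sentence ``Now combining (\ref{gw}) with (\ref{alt}) we have the following result,'' together with the paragraph at the start of Section~\ref{3.2} on passing from $E_r(n)\otimes E_s(n)$ to $P_r(n)\otimes P_s(n)$. You have simply made explicit the intermediate steps (semisimplicity converting $\Hom$-dimension to multiplicity, the appeal to Proposition~\ref{singular}(i) so that $\nu=\nu^{(0)}$, and exactness of restriction on Grothendieck groups), all of which are implicit in that one line.
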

For sufficiently large values of $n$ the partition algebra is semisimple.  Therefore Theorem \ref{transfer} reproves the limiting behaviour of tensor products observed by Murnaghan (see Section \ref{murg}).   It also  offers the following concrete representation theoretic interpretation of the $\overline{g}_{\lambda , \mu}^{\nu}$. 
 
 \begin{cor}
 Let  $\lambda \vdash r$ and $\mu \vdash s$ and suppose $|\nu|\leq r+s$.  Then we have
$$
 \overline{g}_{\lambda , \mu }^{\nu} =   [\Delta_{r+s}(\nu)\!\downarrow_{P_r(n)\otimes P_s(n)}:L_r(\lambda)\boxtimes L_s(\mu)] .
$$
 \end{cor}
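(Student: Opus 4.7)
The plan is to extract the corollary from Theorem \ref{transfer} by pushing $n$ into the semisimple regime, where the alternating sum collapses to a single term. The three ingredients I will combine are Murnaghan's stability (from Section \ref{murg}), Proposition \ref{singular}(ii), and Theorem \ref{transfer} itself. The hypothesis $|\nu|\leq r+s$ ensures $\nu\in\Lambda_{\leq r+s}$, so the nontrivial branch of Theorem \ref{transfer} is triggered once $\lambda_{[n]},\mu_{[n]},\nu_{[n]}$ are genuine partitions of $n$.

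First I would fix $n$ large enough that $n-|\nu|\geq \nu_1$ (and the analogous inequalities for $\lambda$ and $\mu$), that Murnaghan stability is in force so that $g_{\lambda_{[n]},\mu_{[n]}}^{\nu_{[n]}}=\overline{g}_{\lambda,\mu}^{\nu}$, and that $n+1-\nu_1>r+s$. All of these hold once $n$ exceeds an explicit bound depending only on $\lambda$, $\mu$, $\nu$, $r$, and $s$.

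With the last inequality in hand, Proposition \ref{singular}(ii) applied inside $P_{r+s}(n)$ tells me that $\nu$ is the unique element of its $P_{r+s}(n)$-block. In the notation of Theorem \ref{transfer}, the maximal chain of $n$-pairs $\nu=\nu^{(0)}\hookrightarrow_n\cdots\hookrightarrow_n\nu^{(t)}$ has $t=0$, so the alternating sum collapses to its single term. Combining with the Murnaghan identity yields
$$\overline{g}_{\lambda,\mu}^{\nu} \;=\; g_{\lambda_{[n]},\mu_{[n]}}^{\nu_{[n]}} \;=\; [\Delta_{r+s}(\nu)\!\downarrow_{P_r(n)\otimes P_s(n)}:L_r(\lambda)\boxtimes L_s(\mu)],$$
which is the stated identity.

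The main (and essentially only) subtlety is conceptual: although the right-hand side is written with $n$ as a parameter, the content of the corollary is that the multiplicity stabilises for sufficiently large $n$, at which point $P_{r+s}(n)$ is semisimple and the decomposition number is literally the multiplicity of $\Delta_r(\lambda)\boxtimes\Delta_s(\mu)=L_r(\lambda)\boxtimes L_s(\mu)$ in the restriction of the standard module. No new representation-theoretic input beyond Theorem \ref{transfer} and Proposition \ref{singular} is required; the proof is genuinely a one-line consequence once the bound on $n$ is unpacked.
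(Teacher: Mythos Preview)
Your proposal is correct and follows the same route the paper intends: for $n$ large the block of $\nu$ in $P_{r+s}(n)$ is a singleton (the paper phrases this as semisimplicity, you use Proposition~\ref{singular}(ii); same effect), so the alternating sum in Theorem~\ref{transfer} collapses to its $i=0$ term. One small remark on logical flow: you invoke Murnaghan stability from Section~\ref{murg} as an external input, whereas the paper's surrounding text treats the corollary as simultaneously \emph{reproving} that stability---the right-hand side is independent of $n$ for large $n$ (since then all standard modules are simple and the filtration multiplicities of Theorem~\ref{partition} do not involve $\delta$), hence so is the left-hand side. Either order is valid.
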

 
  \begin{rmk}We recover the Murnaghan--Littlewood Theorem as follows.
Let $\lambda,\mu,\nu$ be partitions and suppose that $|\lambda|+|\mu|=|\nu|$.  Then we have that $\Delta_{r+s}(\nu)=\Sp(\nu)$, $\Delta_{r}(\lambda)=\Sp(\lambda)$ and $\Delta_s(\mu)=\Sp(\mu)$  and so we have
$$\overline{g}^\nu_{\lambda,\mu}=c^\nu_{\lambda,\mu}.$$
\end{rmk}

\begin{cor} \label{corol} 
We have that $\overline{g}_{\lambda,\mu}^\nu={g}_{	\lambda_{[n]},\mu_{[n]}	}^{	\nu_{[n]}	}$ if 
$$n \geq {\rm min}\{|\lambda|+|\mu|+\nu_1,|\lambda|+|\nu|+\mu_1, |\nu|+|\mu|+\lambda_1\}.$$
\end{cor}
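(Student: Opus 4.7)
The plan is to deduce the corollary by collapsing the alternating sum of Theorem \ref{transfer} to its leading term using Proposition \ref{singular}(ii), and then to promote this one bound to three via the symmetry of the Kronecker coefficient in its three arguments.

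Set $r=|\lambda|$ and $s=|\mu|$. First dispose of the case $|\nu|>r+s$: Theorem \ref{transfer} directly gives $g^{\nu_{[n]}}_{\lambda_{[n]},\mu_{[n]}}=0$ for every valid $n$, and hence the stable limit $\overline{g}^\nu_{\lambda,\mu}=0$ as well, so the identity is trivial. Assume $|\nu|\leq r+s$. Comparing the alternating sum of Theorem \ref{transfer} with the one-term formula $\overline{g}^\nu_{\lambda,\mu}=[\Delta_{r+s}(\nu)\!\downarrow_{P_r(n)\otimes P_s(n)}:L_r(\lambda)\boxtimes L_s(\mu)]$ provided by the previous corollary, one sees that $g^{\nu_{[n]}}_{\lambda_{[n]},\mu_{[n]}}=\overline{g}^\nu_{\lambda,\mu}$ exactly when the $P_{r+s}(n)$-block of $\nu$ is the singleton $\{\nu\}$, i.e.\ when the chain $\nu=\nu^{(0)}\hookrightarrow_n\cdots\hookrightarrow_n\nu^{(t)}$ has $t=0$. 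By Proposition \ref{singular}(ii) this is equivalent to $n+1-\nu_1>r+s$, i.e.\ to $n\geq|\lambda|+|\mu|+\nu_1$, which is the first of the three candidate bounds.

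For the remaining two bounds, I invoke the symmetry of Kronecker coefficients in their three arguments: since Specht modules of $\mathfrak{S}_n$ are self-dual, $g^{\nu_{[n]}}_{\lambda_{[n]},\mu_{[n]}}=\dim\Hom_{\mathfrak{S}_n}(\Sp(\lambda_{[n]})\otimes\Sp(\mu_{[n]})\otimes\Sp(\nu_{[n]}),\mathbb{C})$ is symmetric in $(\lambda,\mu,\nu)$, and passing to the limit this symmetry descends to $\overline{g}^\nu_{\lambda,\mu}$. Repeating the argument with the roles of $\nu$ and $\mu$ (respectively $\nu$ and $\lambda$) interchanged yields the stability bounds $n\geq|\lambda|+|\nu|+\mu_1$ and $n\geq|\mu|+|\nu|+\lambda_1$. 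Since $n$ is assumed to exceed at least one of the three, the equality holds.

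I do not anticipate any real obstacle: Proposition \ref{singular}(ii) already does the essential work of characterising when $\nu$ is alone in its block, and the rest is formal. The one bookkeeping point to verify is that the bound under consideration ensures $\lambda_{[n]}$, $\mu_{[n]}$, $\nu_{[n]}$ are all genuine partitions of $n$, which follows from the inequality being used together with the triangle constraint $|\nu|\leq|\lambda|+|\mu|$ (and its permutations in the other two cases).
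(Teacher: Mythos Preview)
Your proposal is correct and follows essentially the same route as the paper: invoke Proposition~\ref{singular}(ii) to conclude that $\Delta_{r+s}(\nu)=L_{r+s}(\nu)$ whenever $n\ge|\lambda|+|\mu|+\nu_1$, so the alternating sum in Theorem~\ref{transfer} reduces to its $i=0$ term, and then use the $\mathfrak{S}_3$-symmetry of the Kronecker coefficients to obtain the other two bounds. One minor wording issue: you write that $g=\overline g$ \emph{exactly} when the block of $\nu$ is a singleton, but only the ``if'' direction is true (and is all you need); the higher terms of the alternating sum could in principle cancel even when $t>0$.
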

\begin{proof}
 When $n\geq |\lambda|+|\mu|+\nu_1$ we have that $\Delta_{r+s}(\nu)=L_{r+s}(\nu)$ by Proposition \ref{singular}.
  The result now follows as 
 \begin{equation*}g^{\nu_{[n]}}_{\lambda_{[n]}, \mu_{[n]}}=
  g^{\mu_{[n]}}_{\lambda_{[n]}, \nu_{[n]}}=
   g^{\lambda_{[n]}}_{\nu_{[n]}, \mu_{[n]}}.\qedhere \end{equation*}
\end{proof}

Corollary \ref{corol}  gives a new proof of  Brion's bound \cite{brion} for the stability of the Kronecker coefficients using the partition algebra. 

\subsection{The Kronecker coefficients as a sum of reduced Kronecker coefficients}\label{3.3}
In \cite{rosa} a formula is given for writing the Kronecker coefficients as a sum of reduced Kronecker coefficients.  We shall now interpret this formula in the Grothendieck group of the partition algebra by showing that it coincides with the formula  in Theorem \ref{transfer}.

Let $\nu_{[n]}$ be a partition of $n$.  We make the convention that $\nu_0=n-|\nu|$ is the $0$th row of $\nu_{[n]}$. For $i\in \mathbb{Z}_{\geq 0}$ define ${\nu_{[n]}^{\dagger i}}$ to be the partition obtained from $\nu_{[n]}$ by adding 1 to its first $i-1$ rows and erasing its $i$th row. In particular we have ${\nu_{[n]}^{\dagger 0}}=\nu$. 

\begin{thm}[Theorem 1.1 of \cite{rosa}]\label{rossss}
Let $\lambda_{[n]}, \mu_{[n]}, \nu_{[n]}\vdash n$. 
Then 
$$g^{\nu_{[n]}}_{\lambda_{[n]},\mu_{[n]}}	= \sum_{i=0}^{l} (-1)^i \overline{g}^{\nu_{[n]}^{\dagger i}}_{\lambda,\mu}	 			$$
where $l=\ell(\lambda_{[n]})\ell(\mu_{[n]})-1$.
\end{thm}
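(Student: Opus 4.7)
The plan is to deduce Theorem \ref{rossss} directly from Theorem \ref{transfer} by combining it with the corollary that identifies $\overline{g}^{\nu}_{\lambda,\mu}$ with $[\Delta_{r+s}(\nu)\!\downarrow : L_r(\lambda)\boxtimes L_s(\mu)]$ whenever $|\nu| \leq r+s$. Both formulae are alternating sums over a chain of partitions of increasing size, and the proof reduces to matching the indices.

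The main step is to identify the chain of $n$-pairs $\nu = \nu^{(0)} \hookrightarrow_n \nu^{(1)} \hookrightarrow_n \cdots \hookrightarrow_n \nu^{(t)}$ in $P_{r+s}(n)$ with the sequence $\nu_{[n]}^{\dagger 0}, \nu_{[n]}^{\dagger 1}, \ldots, \nu_{[n]}^{\dagger t}$. I would prove $\nu^{(i)} = \nu_{[n]}^{\dagger i}$ by induction on $i$, the base case holding by convention. Assuming $\nu^{(i-1)} = \nu_{[n]}^{\dagger(i-1)}$, one computes $|\nu^{(i-1)}| = n + (i-1) - \nu_{i-1}$, so the $n$-pair condition forces the horizontal strip $\nu^{(i)}/\nu^{(i-1)}$ to lie in the $i$th row with its rightmost box at content $n - |\nu^{(i-1)}| = \nu_{i-1} + 1 - i$. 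Since a box at position $(i,j)$ has content $j - i$, the $i$th row of $\nu^{(i)}$ has length $\nu_{i-1} + 1$ while all other rows coincide with those of $\nu^{(i-1)}$, giving
\[
\nu^{(i)} = (\nu_0+1,\, \nu_1+1,\, \ldots,\, \nu_{i-1}+1,\, \nu_{i+1},\, \nu_{i+2},\, \ldots) = \nu_{[n]}^{\dagger i}.
\]

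Substituting this identification into Theorem \ref{transfer} and applying the corollary termwise yields
\[
g^{\nu_{[n]}}_{\lambda_{[n]},\mu_{[n]}} = \sum_{i=0}^{t}(-1)^i\,\overline{g}^{\nu_{[n]}^{\dagger i}}_{\lambda,\mu}.
\]
To extend the summation from $t$ up to $l$, I would observe that $|\nu_{[n]}^{\dagger i}| = n + i - \nu_i$ is strictly increasing in $i$, so $|\nu_{[n]}^{\dagger i}| > r+s$ for every $i > t$. Combined with the Schur--Weyl observation that $\Sp(\lambda_{[N]})\otimes \Sp(\mu_{[N]})$, viewed as an $\mathfrak{S}_N$-submodule of $V_N^{\otimes(r+s)}$, only contains copies of $\Sp(\rho_{[N]})$ with $|\rho| \leq r+s$, this forces $\overline{g}^{\nu_{[n]}^{\dagger i}}_{\lambda,\mu} = 0$ for all $i > t$. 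Hence the added terms all vanish, and truncating at $l$ is harmless provided $l \geq t$.

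The main obstacle is the inductive identification $\nu^{(i)} = \nu_{[n]}^{\dagger i}$; the remainder is bookkeeping. The one subtle point is checking $l \geq t$, which I would handle using the classical length bound $\ell(\nu_{[n]}) \leq \ell(\lambda_{[n]})\ell(\mu_{[n]})$ (necessary for a nonzero Kronecker coefficient) together with an analysis of how the chain extends beyond $\ell(\nu_{[n]})$ by appending single-box rows once $\nu_i = 0$.
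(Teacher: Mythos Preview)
Your overall strategy matches the paper's: the Proposition immediately following Theorem~\ref{rossss} proves exactly your inductive identification $\nu^{(i)}=\nu_{[n]}^{\dagger i}$, and the subsequent Remark then reconciles the two ranges of summation. So the architecture is right.

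The gap is in your final paragraph. You assert that it suffices to check $l\geq t$, and propose to do this via the length bound $\ell(\nu_{[n]})\leq \ell(\lambda_{[n]})\ell(\mu_{[n]})$ together with an analysis of the single-box tail of the chain. But $l\geq t$ is simply false in general. The paper gives the explicit example $\lambda_{[n]}=\mu_{[n]}=\nu_{[n]}=(10,10,10)$: here $l=3\cdot 3-1=8$, while $r+s=40$ and $|\nu^{(i)}|=30+i$ for $i\geq 3$, so the chain in $\Lambda_{\leq r+s}$ runs out to $t=10>l$. Your length-bound idea cannot rescue this, both because the bound $\ell(\nu_{[n]})\leq \ell(\lambda_{[n]})\ell(\mu_{[n]})$ only constrains $\nu_{[n]}$ itself (not the later $\nu^{(i)}$), and because it presupposes the Kronecker coefficient is nonzero, which you do not yet know.

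What is actually needed when $l<t$ is to show that the \emph{extra terms in Theorem~\ref{transfer}} vanish, i.e.\ that $\overline{g}^{\,\nu^{(i)}}_{\lambda,\mu}=0$ for $l<i\leq t$. The paper does this with a genuinely different ingredient: for such $i$ one has $\ell(\nu^{(i)})\geq i\geq \ell(\lambda_{[n]})\ell(\mu_{[n]})\geq |\lambda_{[n]}\cap(\mu_{[n]})'|$, and then Dvir's theorem forces the reduced coefficient to vanish. Without invoking Dvir (or an equivalent bound on $\ell$ for reduced Kronecker coefficients), the argument does not close in this direction.
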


Relating this to the partition algebra, we have the following.

\begin{prop}
Let $  \nu_{[n]}\vdash n$ and let $ \nu= \nu^{(0)} \hookrightarrow_n \nu^{(1)}\hookrightarrow_n \ldots $ be a chain of $n$-pairs.  Then the partitions
$${\nu_{[n]}^{\dagger i}}= \nu^{(i)}$$
for all $ i \geq 0$.   
\end{prop}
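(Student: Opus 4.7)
My plan is to proceed by induction on $i \geq 0$ to establish the explicit formula
\[
\nu^{(i)} = (\nu_0 + 1,\ \nu_1 + 1,\ \ldots,\ \nu_{i-1} + 1,\ \nu_{i+1},\ \nu_{i+2},\ \ldots), \qquad \nu_0 := n - |\nu|,
\]
read simply as $\nu$ when $i = 0$. Unpacking the definition of the $\dagger$ operation on $\nu_{[n]} = (\nu_0, \nu_1, \nu_2, \ldots)$---adding $1$ to its top rows and deleting its $i$th entry---produces exactly the same tuple, so the equality $\nu_{[n]}^{\dagger i} = \nu^{(i)}$ will follow at once from the formula.

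The base case $i = 0$ is immediate from the definitions. For the inductive step I would use the structural fact recalled in Section \ref{comb}: $\nu^{(i)}/\nu^{(i-1)}$ is a horizontal strip in the $i$th row whose rightmost box has content $n - |\nu^{(i-1)}|$. Summing the entries in the inductive formula for $\nu^{(i-1)}$ yields $|\nu^{(i-1)}| = n + (i-1) - \nu_{i-1}$, so the rightmost box of the added strip has content $\nu_{i-1} - (i-1)$. Since that box sits at position $(i, \nu^{(i)}_i)$, whose content is $\nu^{(i)}_i - i$, we obtain $\nu^{(i)}_i = \nu_{i-1} + 1$; the remaining rows are unchanged from $\nu^{(i-1)}$, and substituting gives the claimed formula for $\nu^{(i)}$. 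Checking that the resulting tuple is a valid partition reduces to the chain of inequalities $\nu_{j-1} \geq \nu_j$ together with $\nu_0 \geq \nu_1$, the latter being precisely the hypothesis that $\nu_{[n]}$ is a partition.

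The only point that requires real care---and the sole obstacle I anticipate---is keeping two indexing conventions straight: the paper indexes the rows of $\nu_{[n]}$ starting from $\nu_0 = n - |\nu|$, whereas the rows of a Young diagram and the content statistic are inherently $1$-indexed, so ``the $i$th row'' appearing in the definition of $\dagger$ and ``the $i$th row'' appearing in the definition of an $n$-pair must be aligned carefully. Once this dictionary is fixed, the computation of $|\nu^{(i-1)}|$ and of the content of the strip's last box is a routine arithmetic exercise, and the whole proof is essentially bookkeeping with no substantive combinatorial content beyond the already-cited properties of $n$-pairs.
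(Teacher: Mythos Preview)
Your proposal is correct and follows essentially the same approach as the paper's own proof: both argue by induction on $i$, use the explicit description of $\nu^{(i)}$ to compute $|\nu^{(i)}|$ (the paper works with $i\to i+1$ rather than $i-1\to i$, but this is cosmetic), and then read off the length of the new $(i{+}1)$st row from the content condition defining an $n$-pair. Your additional remark that the resulting tuple is a genuine partition, and your explicit warning about the two indexing conventions, are points the paper leaves implicit.
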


\begin{proof}
The $i=0$ case is clear from the definitions.  We proceed by induction.  Assume that
$${\nu_{[n]}^{\dagger i}}= \nu^{(i)}.$$
Then $(  \nu^{(i)} )_1= n- |\nu| + 1$,  $(  \nu^{(i)} )_j=  \nu_{j-1} + 1$ for $j\leq i$, and  $(  \nu^{(i)} )_j= \nu_{j}$ for $j>i$.  Therefore 
\begin{align*}
|\nu^{(i)}|=n- |\nu|+1 +\sum_{j \neq i}\nu_j + i-1 = n- \nu_i+i
\end{align*}

We have that $   \nu^{(i+1)} /  \nu^{(i)}$ is a skew partition consisting of a strip in the $(i+1)$th row.  By definition of an $n$-pair the content, $\ct(\nu^{(i+1)}_{i+1})$, of the last node is $n-| \nu^{(i)}|$.  Therefore
\begin{align*}
\ct(\nu^{(i+1)}_{i+1}):= \nu^{(i+1)}_{i+1} - (i+1)=  n- (n- \nu_i+i) = \nu_i - i
\end{align*}
and $\nu^{(i+1)}_{i+1}=\nu_i+1$, therefore ${\nu_{[n]}^{\dagger (i+1)}}= \nu^{(i+1)}$.
\end{proof}

\begin{rmk}
In Theorem \ref{transfer}, $t$ is chosen so that $|\nu^{(t)}| \leq |\lambda|+|\mu|$ and $|\nu^{(t+1)}| > |\lambda|+|\mu|$.  So Theorem \ref{transfer} and \ref{rossss} seem to give a different number of terms in the sum.  For example consider 
$$g^{(2)}_{(1^2),(1^2)}=1  \quad g^{(1^2)}_{(1^2),(1^2)}=0$$ 
these are given as a sum of one, respectively two terms in Theorem \ref{transfer}, both cases have four terms in Theorem \ref{rossss}.  Now consider $$\lambda_{[n]}=\mu_{[n]}=\nu_{[n]}=(10,10,10)$$ then $\ell(\lambda_{[n]})\ell(\mu_{[n]})=9$. We have $\nu_{[n]}^{\dagger 8}=(11^3,1^5)$ with $|\nu_{[n]}^{\dagger 8}|=38$. But $r+s=40$, so we have two more terms in Theorem \ref{transfer}, corresponding to $\nu^{(9)}=(11^3, 1^6)$ and $\nu^{(10)}=(11^3, 1^7)$. However, we can show that in fact the two theorems give the same sum.

First assume that $\ell(\lambda_{[n]})\ell(\mu_{[n]})-1>t$, then for all $i>t$ we have $$\bar{g}_{\lambda , \mu}^{\nu_{[n]}^{\dagger i}}=0$$ as $|\nu_{[n]}^{\dagger i}|>|\lambda|+|\mu|$. And so the two sums coincide.

Now assume that $\ell(\lambda_{[n]})\ell(\mu_{[n]})-1 <t$. Then for all $i>\ell(\lambda_{[n]})\ell(\mu_{[n]})-1$, we have 
$$i\geq \ell(\lambda_{[n]})\ell(\mu_{[n]}) \geq |\lambda_{[n]}\cap (\mu_{[n]})'|.$$
where $(\mu_{[n]})'$ denotes the conjugate partition of $\mu_{[n]}$.  (To see this observe that the Young diagram of $\lambda_{[n]}\cap (\mu_{[n]})'$ fits in a rectangle of size $\ell(\lambda_{[n]}) \times \ell(\mu_{[n]})$).
Now we have
$$\ell(\nu^{(i)})\geq i \geq |\lambda_{[n]}\cap (\mu_{[n]})'|.$$
But this implies that $\bar{g}_{\lambda, \mu}^{\nu^{(i)}}=0$ by \cite{Dvir}.
\end{rmk}


\section{The restriction of a standard module to a  Young subalgebra }
In this section we compute the restriction of a standard module to a Young subalgebra of the partition algebra.
  
Set $m=r+s$ for some $r,s\geq 1$ and fix $\delta\in \mathbb{C}^\times$.
We write $P_r=P_r(\delta)$, $P_s=P_s(\delta)$ and $P_m=P_m(\delta)$. We view $P_r\otimes P_s$ as a subalgebra of $P_m$ by mapping each $d\otimes d'$, where $d$ (resp. $d'$) is an $(r,r)$- (resp. $(s,s)$-) partition diagram, to the $(m,m)$-partition diagram obtained by putting $d$ and $d'$ side by side, with $d$ to the left of $d'$.

We wish to understand the restriction of $\Delta_m(\nu)$ to the subalgebra $P_r\otimes P_s$. Let $\mathfrak{D}_r$ denote the diagonal copy of $\mathfrak{S}_r$ in $\mathfrak{S}_r\times \mathfrak{S}_r$.  We will need the following lemmas.

\begin{lem}\label{brauerlemma} Let $V(2r,0)_r$ be the subspace of $V(2r,0)$ spanned by all $(2r,0)$-partition diagrams having precisely $r$ blocks of the form $\{i,j\}$ with $1\leq i\leq r$ and $r+1\leq j\leq 2r$. Then $V(2r,0)_r$ is a $\mathfrak{S}_r\times \mathfrak{S}_r$-module and we have
$$V(2r,0)_r \cong \mathbb{C}\!\uparrow_{\mathfrak{D}_r}^{\mathfrak{S}_r \times \mathfrak{S}_r} \cong \bigoplus_{\lambda\vdash r} \Sp(\lambda)\boxtimes \Sp(\lambda).$$
The structure as a $P_r\times P_r$-module is (trivially) obtained by inflation.
\end{lem}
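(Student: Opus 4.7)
The plan is to match $V(2r,0)_r$ explicitly with a transitive permutation module of $\mathfrak{S}_r \times \mathfrak{S}_r$, at which point the decomposition into $\bigoplus_{\lambda \vdash r} \Sp(\lambda) \boxtimes \Sp(\lambda)$ becomes a standard application of Frobenius reciprocity (equivalently, the Peter--Weyl / Artin--Wedderburn decomposition of $\mathbb{C}\mathfrak{S}_r$).

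First I would set up a canonical basis.  A diagram in $V(2r,0)_r$ consists of $r$ blocks of size two, each pairing some $i\in\{1,\dots,r\}$ with some $r+j\in\{r+1,\dots,2r\}$; the assignment $i\mapsto j$ is a bijection, and this sets up a basis bijection between $V(2r,0)_r$ and $\mathfrak{S}_r$.  Under this identification I would then compute the action of $\mathfrak{S}_r\times\mathfrak{S}_r$, embedded in $P_{2r}$ through permutation diagrams with the first factor permuting the vertices $\{1,\dots,r\}$ and the second permuting $\{r+1,\dots,2r\}$.  A short check by stacking a permutation diagram above a set partition shows that $(\alpha,\beta)$ sends the diagram indexed by $\sigma$ to the one indexed by $\beta\sigma\alpha^{-1}$.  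In particular, $V(2r,0)_r$ is closed under this action, the action is transitive on the basis, and the stabiliser of the identity diagram (with blocks $\{i,r+i\}$) is exactly the diagonal subgroup $\mathfrak{D}_r$.  This gives the first isomorphism $V(2r,0)_r\cong \mathbb{C}\!\uparrow_{\mathfrak{D}_r}^{\mathfrak{S}_r\times\mathfrak{S}_r}$.

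For the second isomorphism I would apply Frobenius reciprocity: for any pair $\lambda,\mu\vdash r$, the multiplicity of $\Sp(\lambda)\boxtimes\Sp(\mu)$ in the induced module equals $\dim\Hom_{\mathfrak{D}_r}(\Sp(\lambda)\otimes\Sp(\mu),\mathbb{C})=\dim\Hom_{\mathfrak{S}_r}(\Sp(\lambda),\Sp(\mu)^{*})$, which is $\delta_{\lambda\mu}$ by the self-duality of Specht modules over $\mathbb{C}$.  One can equally realise both isomorphisms at once by identifying $\mathbb{C}\!\uparrow_{\mathfrak{D}_r}^{\mathfrak{S}_r\times\mathfrak{S}_r}$ with the regular bimodule $\mathbb{C}\mathfrak{S}_r$ (with $(\alpha,\beta)$ acting as $x\mapsto \beta x \alpha^{-1}$) and then quoting the Peter--Weyl decomposition.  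Finally, the inflation to a $P_r\times P_r$-module is automatic since multiplication by $e_r$ on either side would collapse a pair of propagating edges into a non-propagating block, hence annihilates every basis vector in $V(2r,0)_r$, so the action factors through $(P_r/P_re_rP_r)\otimes(P_r/P_re_rP_r)\cong \mathbb{C}\mathfrak{S}_r\otimes\mathbb{C}\mathfrak{S}_r$.  The only delicate step is the bookkeeping in the action computation — one must track carefully how stacking a permutation diagram above a set partition moves vertex labels so that the resulting formula $\sigma\mapsto\beta\sigma\alpha^{-1}$ really is the regular action — but the rest of the argument is entirely routine.
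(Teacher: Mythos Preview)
Your proposal is correct and follows essentially the same approach as the paper. The paper's proof is extremely terse: it simply fixes a base diagram $v_0$, observes that $\{(1,\sigma)v_0:\sigma\in\mathfrak{S}_r\}$ is a basis, and declares the map $(1,\sigma)v_0\mapsto(1,\sigma)$ an isomorphism, leaving both the verification of equivariance and the Specht decomposition of the induced module to the reader; you have supplied exactly those details (the explicit action $\sigma\mapsto\beta\sigma\alpha^{-1}$, the stabiliser computation, Frobenius reciprocity, and the inflation remark).
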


\begin{proof}
A basis for $V(2r,0)_r$ is given by the set $\{(1,\sigma)v_0 : \sigma \in \mathfrak{S}_r\}$ where $v_0$ is the diagram given in Figure \ref{fig1}.
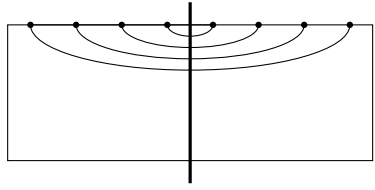
\begin{figure}[ht]
\begin{tikzpicture}[scale=0.6]
  \draw (0,0) rectangle (8,3);
  \foreach \x in {0.5,1.5,...,7.5}
    {\fill (\x,3) circle (2pt);
    }
  \begin{scope}[very thick]
        \draw (4,3.5) -- (4,-.5);
        \end{scope}
  \begin{scope}
    \draw (0.5,3) -- (4.5,3);
    \draw (2.5,3) arc (180:360:1.5 and .5);
        \draw (3.5,3) arc (180:360:.5 and .25);
    \draw (0.5,3) arc (180:360:3.5 and 1);
        \draw (1.5,3) arc (180:360:2.5 and 0.75);
  \end{scope}
\end{tikzpicture}

 \caption{The diagram $v_0$ in $V(2r,0)_r$}
\label{fig1}
\end{figure}

Now, the map 
$$f: V(2r,0)_r \to \mathbb{C}\!\uparrow_{\mathfrak{D}_r}^{\mathfrak{S}_r\times \mathfrak{S}_r}$$
given by 
$$f((1,\sigma)v_0)=(1,\sigma)$$
gives the required isomorphism.
%
\end{proof}

\begin{lem}\label{partitionlemma} Let $V(2r,r)_r$ be the subspace of $V(2r,r)$ spanned by all $(2r,r)$-partition diagrams having precisely $r$ propagating blocks of the form $\{i,j,\bar{k}\}$ with $1\leq i\leq r$ and $r+1\leq j\leq 2r$. Then, for any $\mu\vdash r$ we have that $V(2r,r)_r\otimes_{\mathfrak{S}_r}\Sp(\mu)$ is a $\mathfrak{S}_r\times \mathfrak{S}_r$-module and we have
$$V(2r,r)_r\otimes_{\mathfrak{S}_r}\Sp(\pi)\cong \mathbf{S}(\pi) \!\uparrow_{\mathfrak{D}_r}^{\mathfrak{S}_r\times \mathfrak{S}_r}\cong\bigoplus_{\rho,\sigma} g_{\rho,\sigma}^\pi \Sp(\rho)\boxtimes \Sp(\sigma).$$
The structure as a $P_r\times P_r$-module is (trivially) obtained by inflation.
\end{lem}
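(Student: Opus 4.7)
The plan is to parallel the proof of Lemma \ref{brauerlemma}, now keeping track of the additional right $\mathfrak{S}_r$-action on the bottom row of the diagrams. Fix the base element $w_0\in V(2r,r)_r$ with blocks $\{i,r+i,\bar{i}\}$ for $i=1,\ldots,r$. Every element of $V(2r,r)_r$ can be written uniquely in the form $(g_1,g_2)w_0$ with $(g_1,g_2)\in\mathfrak{S}_r\times\mathfrak{S}_r$, since the left $\mathfrak{S}_r\times\mathfrak{S}_r$-orbit of $w_0$ consists of $(r!)^2$ pairwise distinct diagrams, matching $\dim V(2r,r)_r$.

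The key computation is to show by direct concatenation of diagrams that, under the identification $(g_1,g_2)w_0\leftrightarrow (g_1,g_2)$, the right $\mathfrak{S}_r$-action on $V(2r,r)_r$ corresponds to right multiplication by the diagonal subgroup $\mathfrak{D}_r\subset\mathfrak{S}_r\times\mathfrak{S}_r$: concretely, for $h\in\mathfrak{S}_r$ one checks that
\[
(g_1,g_2)w_0\cdot h \;=\; (g_1 h,g_2 h)w_0.
\]
This identifies $V(2r,r)_r$ as a $(\mathfrak{S}_r\times\mathfrak{S}_r,\mathfrak{S}_r)$-bimodule with $\mathbb{C}(\mathfrak{S}_r\times\mathfrak{S}_r)$, where the right $\mathfrak{S}_r$ acts via $\mathfrak{D}_r$.

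Tensoring with $\Sp(\pi)$ over $\mathfrak{S}_r$ then immediately yields
\[
V(2r,r)_r\otimes_{\mathfrak{S}_r}\Sp(\pi)\;\cong\;\mathbb{C}(\mathfrak{S}_r\times\mathfrak{S}_r)\otimes_{\mathbb{C}\mathfrak{D}_r}\Sp(\pi)\;\cong\;\Sp(\pi)\!\uparrow^{\mathfrak{S}_r\times\mathfrak{S}_r}_{\mathfrak{D}_r},
\]
which is the first isomorphism of the lemma. For the second isomorphism, apply Frobenius reciprocity: for each $\rho,\sigma\vdash r$,
\[
\Hom_{\mathfrak{S}_r\times\mathfrak{S}_r}\bigl(\Sp(\pi)\!\uparrow^{\mathfrak{S}_r\times\mathfrak{S}_r}_{\mathfrak{D}_r},\,\Sp(\rho)\boxtimes\Sp(\sigma)\bigr)\;\cong\;\Hom_{\mathfrak{S}_r}\bigl(\Sp(\pi),\,\Sp(\rho)\otimes\Sp(\sigma)\bigr),
\]
using that the restriction of $\Sp(\rho)\boxtimes\Sp(\sigma)$ from $\mathfrak{S}_r\times\mathfrak{S}_r$ to $\mathfrak{D}_r$ is the ordinary tensor product of $\mathfrak{S}_r$-modules. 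By definition the dimension of the right-hand side is the Kronecker coefficient $g_{\rho,\sigma}^\pi$, completing the decomposition.

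The main obstacle is the diagrammatic calculation giving the bimodule identification, namely verifying the formula $(g_1,g_2)w_0\cdot h=(g_1 h,g_2 h)w_0$. This requires careful bookkeeping with the conventions for how $\mathfrak{S}_r\times\mathfrak{S}_r$ acts on the top rows and $\mathfrak{S}_r$ on the bottom row; once set up correctly it reduces to the observation that the three vertices of each block $\{i,r+i,\bar{i}\}$ in $w_0$ play symmetric roles under relabelling, which is precisely why the diagonal subgroup $\mathfrak{D}_r$ (rather than some other copy of $\mathfrak{S}_r$) governs the right action. The inflation to a $P_r\times P_r$-module is then automatic from the quotient $P_r\twoheadrightarrow \mathbb{C}\mathfrak{S}_r$ obtained from \eqref{bob2}.
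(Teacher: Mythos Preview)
Your proof is correct and follows essentially the same approach as the paper's. The paper chooses a base diagram $v_1$ (differing from your $w_0$ only by a harmless relabelling of the right-hand top vertices), observes that $\{(1,\sigma)v_1\otimes x : \sigma\in\mathfrak{S}_r,\ x\in X(\pi)\}$ is a basis of the tensor product, and writes down the map $(1,\sigma)v_1\otimes x\mapsto (1,\sigma)\otimes x$ to the induced module directly. You instead first identify $V(2r,r)_r$ with $\mathbb{C}(\mathfrak{S}_r\times\mathfrak{S}_r)$ as a bimodule via the orbit map and your key relation $(g_1,g_2)w_0\cdot h=(g_1h,g_2h)w_0$, and then tensor; this is the same isomorphism expressed one step earlier, using the full group rather than coset representatives. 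Your added Frobenius reciprocity argument for the second isomorphism is a welcome elaboration --- the paper simply asserts that decomposition.
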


\begin{proof}
Let $X(\pi)$ be a basis for $\mathbf{S}(\pi)$.
A basis for $V(2r,r)_r\otimes_{\mathfrak{S}_r} \mathbf{S}(\pi)$ is given by the set 
$$\{(1,\sigma)v_1\otimes x : \sigma \in \mathfrak{S}_r, x \in X(\pi)					\}	$$
where $v_1$ is given in Figure \ref{fig2}.

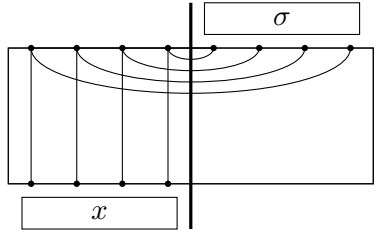
\begin{figure}[ht]
\begin{minipage}{2in}
\begin{tikzpicture}[scale=0.6]
  \draw (0,0) rectangle (8,3);
  \foreach \x in {0.5,1.5,...,7.5}
    {\fill (\x,3) circle (2pt);
    }
  \draw (0,0) rectangle (8,3);
  \foreach \x in {0.5,1.5,...,3.5}
    { 
      \fill (\x,0) circle (2pt);
    }
  \begin{scope}[very thick]
        \draw (4,4) -- (4,-1);
        \end{scope}
  \begin{scope}
    \draw (0.5,3) -- (0.5,0);
        \draw (1.5,3) -- (1.5,0);
            \draw (2.5,3) -- (2.5,0);
                \draw (3.5,3) -- (3.5,0);
    \draw (0.5,3) -- (4.5,3);
    \draw (2.5,3) arc (180:360:1.5 and .5);
        \draw (3.5,3) arc (180:360:.5 and .25);
    \draw (0.5,3) arc (180:360:3.5 and 1);
        \draw (1.5,3) arc (180:360:2.5 and 0.75);
          \draw (0.3,-.3) rectangle (3.7,-1);
          \draw (4.3,3.3) rectangle (7.7,4);
              \draw (2,-0.65) node {$  x$};
              \draw (6,3.65) node {$\sigma $};
  \end{scope}
\end{tikzpicture}
\end{minipage}
 \caption{The element $(1,\sigma)v_1 \otimes x$  }
\label{fig2}
\end{figure}

 Now the map 
$$g: V(2r,r)_r \otimes_{\mathfrak{S}_r} \mathbf{S}(\pi) \to \mathbf{S}(\pi)\!\uparrow_{\mathfrak{D}_r}^{\mathfrak{S}_r\times \mathfrak{S}_r}$$
given by
$$g(1,\sigma)v_1 \otimes x = (1,\sigma)\otimes x			$$
gives the required isomorphism.
\end{proof}

\begin{thm}\label{partition}
Write $m=r+s$ and let $\nu\vdash m-l$, $\lambda\vdash r-l_r$ and $\mu\vdash s-l_s$ for some non-negative integers $l, l_r, l_s$. Then $\Delta_m(\nu)\!\!\downarrow_{P_r\otimes P_s}$ has  a filtration by standard modules with multiplicities given by
$$[\Delta_m(\nu)\!\downarrow_{P_r\otimes P_s} \, : \, \Delta_r(\lambda)\boxtimes \Delta_s(\mu)]=\sum_{\begin{subarray}{c} l_1, l_2 \\ l_1+2l_2=l-l_r-l_s \end{subarray}} \sum_{\begin{subarray}{c} \alpha\vdash r-l_r-l_1-l_2 \\ \beta\vdash s-l_s-l_1-l_2 \\ \pi,\rho,\sigma\vdash l_1 \\ \gamma\vdash l_2 \end{subarray}} c_{\alpha ,\beta ,\pi}^\nu c_{\alpha , \rho ,\gamma}^\lambda c_{\gamma ,\sigma , \beta}^{\mu}g_{\rho,\sigma}^\pi .$$
\end{thm}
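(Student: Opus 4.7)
The plan is to decompose the $(P_r\otimes P_s)$-module $\Delta_m(\nu)\!\downarrow_{P_r\otimes P_s}$, where $\Delta_m(\nu) = V(m,m-l)\otimes_{\mathfrak{S}_{m-l}}\Sp(\nu)$, by classifying each basis $(m,m-l)$-partition diagram according to the ``anatomy'' of its blocks with respect to the splitting of the $m$ northern vertices into a left half of size $r$ and a right half of size $s$. The blocks of such a diagram fall into five disjoint types: propagating blocks with northern vertices only on the left (a count $a$), propagating blocks with northern vertices only on the right (a count $b$), propagating blocks with northern vertices on both sides (the $l_1$ blocks), non-propagating blocks with northern vertices on both sides (the $l_2$ blocks), and non-propagating blocks confined to a single side. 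Since a diagram of $V(m,m-l)$ has $m-l$ propagating blocks, $a+b+l_1=m-l$; and the left (resp.\ right) part of the restricted diagram has $a+l_1+l_2$ (resp.\ $b+l_1+l_2$) propagating blocks, which must equal $r-l_r$ (resp.\ $s-l_s$). This forces $a=r-l_r-l_1-l_2$, $b=s-l_s-l_1-l_2$, and $l_1+2l_2 = l-l_r-l_s$, matching the indexing in the theorem.

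The first step is to factor each such diagram uniquely as a combination of a ``left part'' in $V(r, a+l_1+l_2)$, a ``right part'' in $V(s, b+l_1+l_2)$, a ``propagating mixed bridge'' modelled on $V(2l_1, l_1)_{l_1}$ as in Lemma~\ref{partitionlemma}, and a ``non-propagating mixed bridge'' modelled on $V(2l_2, 0)_{l_2}$ as in Lemma~\ref{brauerlemma}. The left and right ports of each bridge join propagating blocks of the left and right parts; a propagating-mixed bridge additionally terminates in a southern vertex. This refines $V(m,m-l)$ as a $(P_r\otimes P_s,\mathfrak{S}_{m-l})$-bimodule, with the pure-left and pure-right factors becoming standard modules for $P_r$ and $P_s$ once we tensor with appropriate Specht modules.

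The second step is to tensor with $\Sp(\nu)$ over $\mathfrak{S}_{m-l}$, thereby labelling the $m-l$ southern vertices. These split into three groups: $a$ vertices attached to pure left propagating blocks, $b$ attached to pure right propagating blocks, and $l_1$ attached to mixed propagating blocks. Applying the Littlewood--Richardson rule to the associated induction $\mathfrak{S}_a\times\mathfrak{S}_b\times\mathfrak{S}_{l_1}\subset\mathfrak{S}_{m-l}$ produces the coefficient $c^\nu_{\alpha,\beta,\pi}$ with $\alpha\vdash a$, $\beta\vdash b$, $\pi\vdash l_1$. On the left, the propagating blocks of the restricted diagram split as ``pure left'' (labelled $\alpha$) plus ``halves of propagating mixed'' (labelled $\rho\vdash l_1$) plus ``halves of non-propagating mixed'' (labelled $\gamma\vdash l_2$), giving $c^\lambda_{\alpha,\rho,\gamma}$; symmetrically, the right yields $c^\mu_{\gamma,\sigma,\beta}$ with $\sigma\vdash l_1$. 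Lemma~\ref{partitionlemma} then couples the three labels $\pi,\rho,\sigma\vdash l_1$ on the propagating bridge by the Kronecker coefficient $g^\pi_{\rho,\sigma}$, while Lemma~\ref{brauerlemma} enforces the diagonal matching of the label $\gamma\vdash l_2$ between $\lambda$ and $\mu$ on the non-propagating bridge. Summing over all admissible tuples yields the claimed formula.

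The principal obstacle I foresee is the bookkeeping needed to align the various symmetric group actions---in particular, the $\mathfrak{S}_{m-l}$-action used to attach $\Sp(\nu)$ versus the internal $\mathfrak{S}_{l_1}$- and $\mathfrak{S}_{l_2}$-actions arising from Lemmas~\ref{brauerlemma} and~\ref{partitionlemma}---and then verifying that the bimodule decomposition actually gives an honest standard filtration rather than merely an identity in the Grothendieck group. Since each piece of the decomposition is manifestly a standard module tensored with a multiplicity space, and $P_r\otimes P_s$ is quasi-hereditary with standard modules $\Delta_r(\lambda)\boxtimes\Delta_s(\mu)$, this last identification is a formal consequence once the decomposition is in place.
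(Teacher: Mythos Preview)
Your proposal is correct and follows essentially the same strategy as the paper's own proof: filter $V(m,m-l)$ by the number and type of crossing blocks (your $a,b,l_1,l_2$ are the paper's $p_r,p_s,p_c,n_c$), use the ``decomposing diagrams'' factorisation into $V(r,\cdot)\boxtimes V(s,\cdot)$ tensored with the bridge modules, and then apply Lemmas~\ref{brauerlemma} and~\ref{partitionlemma} together with the Littlewood--Richardson rule exactly as you describe. The bookkeeping you anticipate is real but routine, and your derivation of the constraint $l_1+2l_2=l-l_r-l_s$ matches the paper's.
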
 

\begin{rmk}
Note that these multiplicities are well-defined, this follows by the general theory of quasi-hereditary algebras, see \cite{dr1}.
\end{rmk}

\begin{proof} 
Recall that   $\Delta_m(\nu)=V(m,m-l)\otimes_{\mathfrak{S}_{m-l}}\Sp(\nu)$. Now, we say that a block of a diagram in $V(m,m-l)$ is a crossing block if it contains at least one vertex in $\{1, \ldots, r\}$ and one vertex in $\{r+1, \ldots , r+s\}$. It is clear that when we apply a diagram in $P_r\otimes P_s$ to a diagram in $V(m,m-l)$ we cannot increase the number of crossing blocks. Thus we get a filtration of $\Delta_m(\nu)\!\downarrow_{P_r\otimes P_s}$ with subquotients isomorphic to 
$$V(m,m-l)_c\otimes_{\mathfrak{S}_{m-l}} \Sp(\nu)$$
where $V(m,m-l)_c$ denotes the span of all diagrams in $V(m,m-l)$ having precisely $c$ crossing blocks. Now each subquotient splits as
$$V(m,m-l)_c\otimes_{\mathfrak{S}_{m-l}}\Sp(\nu) = \bigoplus_{\begin{subarray}{c} p_r, p_s, p_c, n_c \\ p_r+p_s+p_c=m-l \\ p_c+n_c=c\end{subarray}} V(m,m-l)_{p_r, p_s, p_c, n_c}\otimes_{\mathfrak{S}_{m-l}}\Sp(\nu)$$
where $V(m,m-l)_{p_r, p_s, p_c, n_c}$ is the span of all diagrams in $V(m,m-l)_c$ having precisely $p_r$ (resp. $p_s$) propagating blocks containing only vertices in the set $\{1, \ldots , r\}$ (resp. $\{r+1. \ldots , r+s\}$)  and some $\bar{j}$ (for some $1\leq j\leq m-l$), precisely $p_c$ propagating crossing blocks and precisely $n_c$ non-propagating crossing blocks.  This is illustrated in Figure \ref{fig4}.

\begin{figure}[ht]
 \begin{tikzpicture}[scale=0.6]
  \draw (0,0) rectangle (16,3);
  \foreach \x in {0.5,1.5,...,15.5}
    {\fill (\x,3) circle (2pt); }
    {\fill (2,0) circle (2pt); }
    {\fill (5,0) circle (2pt); }
    {\fill (8,0) circle (2pt); }
        {\fill (11,0) circle (2pt); }
                {\fill (14,0) circle (2pt); }
  \begin{scope}[very thick]
        \draw (10,3.5) -- (10,.7);
        \end{scope}
  \begin{scope}
    \draw (1.5,3) -- (2,0);
    \draw (7.5,3) -- (5,0);
    \draw (9.5,3) -- (8,0);
        \draw (15.5,3) -- (11,0);
            \draw (14.5,3) -- (14,0);
      \draw (1.5,3) arc (180:360:.5 and .3);
     \draw (9.5,3) arc (180:360:.5 and .5);
     \draw (8.5,3) arc (180:360:2.5 and 2);
     \draw (7.5,3) arc (180:360:.5 and .5);
     \draw (2.5,3) arc (180:360:.5 and .5);
     \draw (4.5,3) arc (180:360:3.5 and 1.5);
     \draw (5.5,3) arc (180:360:3.5 and 1);
  \end{scope}
\end{tikzpicture}
  \caption{An element, $w$, of $V(16,5)$ with $p_r$=1, $p_s=2$, $p_c=2$ and $n_c=2$}
\label{fig4}
\end{figure}
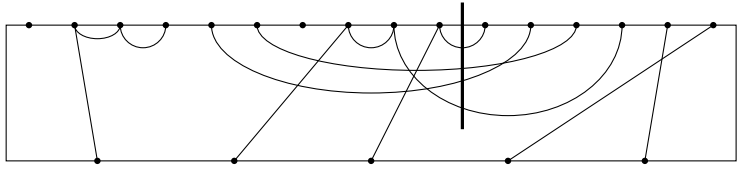

To see this, note that multiplication by elements of $P_r\otimes P_s$ on this subquotient preserves both the total number of propagating blocks and the total number of crossing blocks.

Now we have
\begin{align*}
&V(m,m-l)_{p_r,p_s,p_c,n_c}\otimes_{\mathfrak{S}_{m-l}}\Sp(\nu) \\
=& V(m,m-l)_{p_r,p_s,p_c,n_c}\otimes_{\mathfrak{S}_{p_r}\times \mathfrak{S}_{p_c}\times \mathfrak{S}_{p_s}}\Sp(\nu)\!\downarrow_{\mathfrak{S}_{p_r}\times \mathfrak{S}_{p_c}\times \mathfrak{S}_{p_s}}\\
=& V(m,m-l)_{p_r,p_s,p_c,n_c}\otimes_{\mathfrak{S}_{p_r}\times \mathfrak{S}_{p_c}\times \mathfrak{S}_{p_s}} \bigoplus_{\begin{subarray}{c} \alpha\vdash p_r\\\beta\vdash p_s \\ \pi\vdash p_c\end{subarray}} c_{\alpha , \beta ,\pi}^\nu \Sp(\alpha)\boxtimes \Sp(\pi)\boxtimes \Sp(\beta).
\end{align*}
The key point of the proof is that 
\begin{equation*}V(m,m-l)_{p_r, p_s, p_c, n_c}\otimes_{\mathfrak{S}_{p_r, p_c, p_s}} \Sp(\alpha)\boxtimes \Sp(\pi)\boxtimes \Sp(\beta) \tag{$\dagger$}   \label{dagger}  
\end{equation*} is isomorphic, as a $P_r \times P_s$-module, to
\begin{equation*}  
 (V(r, p_r+p_c+n_c)\boxtimes V(s, p_s+p_c+n_c)) \otimes_{\mathfrak{S}_{p_r+p_c+n_c, p_s+p_c+n_c}}
\end{equation*}
$$ (\Sp(\alpha) \boxtimes (V(2p_c, p_c)_{p_c}\otimes_{\mathfrak{S}_{p_c}}\Sp(\pi)) \boxtimes V(2n_c, 0)_{n_c} \boxtimes \Sp(\beta))\!\uparrow_{\mathfrak{S}_{p_r,p_c,n_c,n_c,p_c,p_s}}^{\mathfrak{S}_{p_r+p_c+n_c, n_c+p_c+p_s}}.$$
This follows by `decomposing the diagrams', this is best illustrated by an example.
Under this isomorphism, the element $w \otimes (x_1 \otimes x_2 \otimes x_3)$ where $w$ is as in Figure \ref{fig4} and $x_i \in \Sp(\alpha)$ is mapped to the element in Figure \ref{fig5}.

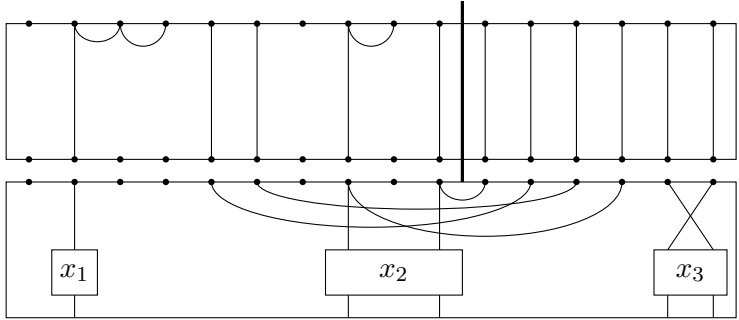
\begin{figure}[ht]
 \begin{tikzpicture}[scale=0.6]
  \draw (0,0) rectangle (16,3);
   \draw (0,-.5) rectangle (16,-3.5);
  \foreach \x in {0.5,1.5,...,15.5}
    {\fill (\x,3) circle (2pt); \fill (\x,0) circle (2pt); \fill (\x,-0.5) circle (2pt); }
  \begin{scope}[very thick]
        \draw (10,3.5) -- (10,-0.5);
        \end{scope}
  \begin{scope}
    \draw (1.5,3) -- (1.5,0);
    \draw (10.5,3) -- (10.5,0);
    \draw (13.5,3) -- (13.5,0);
    \draw (7.5,3) -- (7.5,0);
    \draw (9.5,3) -- (9.5,0);
        \draw (15.5,3) -- (15.5,0);
            \draw (14.5,3) -- (14.5,0);
            \draw (1.5,-2) -- (1.5,-0.5);
                        \draw (1.5,-3) -- (1.5,-3.5);
            \draw (7.5,-2) -- (7.5,-0.5);
                        \draw (7.5,-3) -- (7.5,-3.5);
            \draw (9.5,-2) -- (9.5,-0.5);
                        \draw (9.5,-3) -- (9.5,-3.5);
         \draw (4.5,3) -- (4.5,0); 
         \draw (14.5,-0.5) -- (15.5,-2); 
         \draw (14.5,-3) -- (14.5,-3.5);          \draw (15.5,-3) -- (15.5,-3.5); 
         \draw (15.5,-0.5) -- (14.5,-2); 
         \draw (5.5,3) -- (5.5,0); 
                  \draw (11.5,3) -- (11.5,0);          \draw (12.5,3) -- (12.5,0); 
      \draw (1.5,3) arc (180:360:.5 and .4);
     \draw (9.5,-.5) arc (180:360:.5 and .4);
     \draw (7.5,-.5) arc (180:360:3 and 1.2);
     \draw (7.5,3) arc (180:360:.5 and .5);
     \draw (2.5,3) arc (180:360:.5 and .5);
     \draw (4.5,-.5) arc (180:360:3.5 and 1);
     \draw (5.5,-.5) arc (180:360:3.5 and 0.6);
                   \draw (1,-2) rectangle (2,-3);
                      \draw (7,-2) rectangle (10,-3);
                      \draw (14.2,-2) rectangle (15.8,-3);
                                     \draw (1.5,-2.5) node {$ x_1$};
                                      \draw (8.5,-2.5) node {$ x_2$};
                                       \draw (15,-2.5) node {$ x_3$};
  \end{scope}
\end{tikzpicture}
  \caption{The image of $w \otimes (x_1 \otimes x_2 \otimes x_3)$ under the `decomposing diagrams' isomorphism}
\label{fig5}
\end{figure}

By  application of Lemmas \ref{brauerlemma} and \ref{partitionlemma}, equation (\ref{dagger}) is isomorphic to
$$   (V(r, p_r+p_c+n_c)\boxtimes V(s, p_s+p_c+n_c)) \otimes_{\mathfrak{S}_{p_r+p_c+n_c, p_s+p_c+n_c}}$$
$$\left(\oplus_{\gamma\vdash n_c}\oplus_{\rho,\sigma\vdash  p_c}g_{\rho,\sigma}^\pi
\Sp(\alpha) \boxtimes \Sp(\rho) \boxtimes \Sp(\gamma)\boxtimes \Sp(\gamma)\boxtimes \Sp(\sigma) \boxtimes \Sp(\beta)\right)\! {\large \uparrow}_{\mathfrak{S}_{p_r,p_c,n_c,n_c,p_c,p_s}}^{\mathfrak{S}_{p_r+p_c+n_c, n_c+p_c+p_s}}$$
which by the Littlewood--Richardson rule is isomorphic to
$$\oplus_{\gamma\vdash n_c}\oplus_{\rho,\sigma\vdash  p_c}g_{\rho,\sigma}^\pi (V(r, p_r+p_c+n_c)\boxtimes V(s, p_s+p_c+n_c)) \otimes_{\mathfrak{S}_{p_r+p_c+n_c, p_s+p_c+n_c}}$$
$$\oplus_{\begin{subarray}{c} \lambda\vdash p_r+p_c +n_c \\ \mu\vdash p_s+p_c+n_c \end{subarray}} c_{\alpha , \rho ,\gamma}^\lambda c_{\gamma ,\sigma ,\beta}^\mu \Sp(\lambda)\boxtimes \Sp(\mu).$$
We can now rewrite this as a product of standard modules as follows:
$$ \bigoplus_{\begin{subarray}{c} {\gamma\vdash n_c} \\ {\rho,\sigma\vdash  p_c} \end{subarray}}
\bigoplus_{\begin{subarray}{c} \lambda\vdash p_r+p_c+n_c \\ \mu\vdash p_s+p_c+n_c \end{subarray}}  g_{\rho,\sigma}^\pi c_{\alpha , \rho ,\gamma}^\lambda c_{\gamma , \sigma ,\beta}^\mu\Delta_r(\lambda)\boxtimes \Delta_s(\mu).$$
Now noting that $p_r+p_s+p_c=m-l$, $p_r+p_c+n_c=r-l_r$ and $p_s+p_c+n_c=s-l_s$ and writing $l_1=p_c$ and $l_2=n_c$, we get that $l-l_r-l_s=l_1+l_2$ and the result follows.
\end{proof}

 In \cite[Lemma 2.1]{rosa} a formula is given for writing the reduced Kronecker coefficients as a sum of Kronecker coefficients and Littlewood--Richardson coefficients.  An immediate corollary of the above theorem is an interpretation of this formula in the setting of the partition algebra.

\begin{cor}
\label{bigthm}
Let $\lambda,\mu,\nu$ be any partitions with $|\lambda|=r$, $|\mu|=s$ and $|\nu|=r+s-l$. Then the reduced Kronecker coefficient $\overline{g}_{\lambda , \mu}^{\nu}$ is given by 
$$
\overline{g}^\nu_{\lambda,\mu}=
 \sum_{\begin{subarray}{c}l_1, l_2 \\
l=l_1+2l_2\end{subarray}}
  \sum_{\begin{subarray}{c}{\alpha  \vdash r-l_1-l_2}
    \\  {\beta\vdash s-l_1-l_2}  \end{subarray}}
      \sum_{\begin{subarray}{c}
 {\pi,\rho,\sigma \vdash l_1} \\{\gamma\vdash l_2}
    \end{subarray}} 
    c_{\alpha ,\beta,\pi}^{\nu } c_{\alpha ,\rho,\gamma}^{\lambda } c_{ \gamma,  \sigma, \beta}^{\mu } g_{\rho,\sigma}^\pi   $$
\end{cor}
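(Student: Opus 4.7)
The plan is to combine Corollary 3.5 with Theorem 4.3 in a regime where the partition algebras under consideration are semisimple, so that standard and simple modules coincide.

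First I would choose an integer $n$ large enough so that the partition algebras $P_r(n)$, $P_s(n)$ and $P_{r+s}(n)$ are all semisimple (for instance $n \geq 2(r+s)-1$), and simultaneously $n \geq |\lambda|+|\mu|+\nu_1$ so that Corollary 3.3 applies. In this regime we have $\overline{g}^\nu_{\lambda,\mu}=g^{\nu_{[n]}}_{\lambda_{[n]},\mu_{[n]}}$, and moreover every standard module is simple, so $\Delta_r(\lambda)=L_r(\lambda)$ and $\Delta_s(\mu)=L_s(\mu)$. By Corollary 3.5,
\begin{equation*}
\overline{g}^\nu_{\lambda,\mu} \;=\; [\Delta_{r+s}(\nu)\!\downarrow_{P_r(n)\otimes P_s(n)} : L_r(\lambda)\boxtimes L_s(\mu)].
\end{equation*}
Since $P_r(n)\otimes P_s(n)$ is semisimple, composition multiplicities of simples coincide with the multiplicities of standard modules appearing as subquotients in any standard filtration.

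Next I would apply Theorem 4.3 with $m=r+s$, with $|\lambda|=r$ and $|\mu|=s$ (so $l_r=l_s=0$), and with $l=r+s-|\nu|$. Substituting these values into the formula in Theorem 4.3 yields exactly
\begin{equation*}
\sum_{\begin{subarray}{c}l_1,l_2\\ l_1+2l_2=l\end{subarray}} \sum_{\begin{subarray}{c}\alpha\vdash r-l_1-l_2\\ \beta\vdash s-l_1-l_2\\ \pi,\rho,\sigma\vdash l_1\\ \gamma\vdash l_2\end{subarray}} c^\nu_{\alpha,\beta,\pi}\, c^\lambda_{\alpha,\rho,\gamma}\, c^\mu_{\gamma,\sigma,\beta}\, g^\pi_{\rho,\sigma},
\end{equation*}
which matches the right-hand side of the corollary.

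The only thing to check is consistency: the right-hand side of the formula in Theorem 4.3 is a sum of products of Littlewood--Richardson coefficients and (ordinary, non-reduced) Kronecker coefficients indexed by partitions of size at most $l$, and in particular this expression does not depend on the choice of $n$. Thus taking $n$ large enough to realise the reduced Kronecker coefficient and to force semisimplicity is harmless, and the equality of integers established in the semisimple regime gives the desired identity of combinatorial quantities. The only potential obstacle is verifying that the Theorem 4.3 formula is genuinely independent of $n$ (i.e.\ that $\delta=n$ plays no role beyond ensuring semisimplicity), but this is immediate since the right-hand side is purely combinatorial.
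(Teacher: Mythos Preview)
Your argument is correct and follows essentially the same route as the paper: combine the identification $\overline{g}^\nu_{\lambda,\mu}=[\Delta_{r+s}(\nu)\!\downarrow:L_r(\lambda)\boxtimes L_s(\mu)]$ with Theorem~4.3 specialised to $l_r=l_s=0$. The only refinement in the paper's version is that it does not need global semisimplicity of $P_r(n)$ and $P_s(n)$: since $|\lambda|=r$ and $|\mu|=s$, the standards $\Delta_r(\lambda)$ and $\Delta_s(\mu)$ are inflated simple Specht modules and hence equal $L_r(\lambda)$, $L_s(\mu)$ for \emph{every} $\delta$, so the passage from composition multiplicity to standard-filtration multiplicity is automatic without choosing $n$ large.
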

\begin{proof}
This follows from Theorems \ref{transfer} and \ref{partition}, noting that for $|\lambda|=r$ and $|\mu|=s$, $\Delta_r(\lambda)=L_r(\lambda)$ and $\Delta_r(\mu)=L_r(\mu)$.
\end{proof}


\section{Hooks and two-part partitions}

We now consider the case where one of the partitions in a Kronecker coefficient is either a hook or two-part partition. 
The first positive closed formula for the two-part partition case was due to Ballantine and Orellana \cite{BallantineOrellanaEJC}.  Blasiak \cite{blasiak} has recently given a combinatorial interpretation of the one hook case.  

The result below provides positive closed formulas for $g_{\lambda[n],\mu[n]}^{\nu_{[n]}}$ in the case that $\nu_{[n]}$ is a two-part or hook partition; in the hook case, this is the first such closed formula, in the two-part case our formula simplifies that of \cite{BallantineOrellanaEJC}.
These formulas reveal a distinct symmetry between the two cases.
\begin{cor} 
\label{bigthmspecialcase2}
Let $\lambda_{[n]}, \mu_{[n]}, \nu_{[n]}$ be partitions of $n$ with $|\lambda|=r$, $|\mu|=s$ and  $|\nu|=r+s-l$.  

(i) Suppose  $\nu_{[n]}=(n-k,k)$ is a two-part partition.  
Then we have
$$
{g}^{(n-k,k)}_{\lambda_{[n]},\mu_{[n]}}=
 \sum_{\begin{subarray}{c}l_1, l_2 \\
l=l_1+2l_2\end{subarray}}
      \sum_{\begin{subarray}{c}
 { \sigma \vdash l_1} \\{\gamma\vdash l_2}
    \end{subarray}} 
 c_{(r-l_1-l_2) ,\sigma,\gamma}^{\lambda } c_{ \gamma,  \sigma,(s-l_1-l_2)}^{\mu }   $$
for all $n \geq {\rm min}\{|\lambda|+\mu_1+k, |\mu|+\lambda_1+k\}$.

 (ii) Suppose  $\nu_{[n]}=(n-k,1^k)$ is a hook partition.
Then we have
$$
{g}^{(n-k,1^k)}_{\lambda_{[n]},\mu_{[n]}}=
 \sum_{\begin{subarray}{c}l_1, l_2 \\
l=l_1+2l_2\end{subarray}}
      \sum_{\begin{subarray}{c}
 { \sigma \vdash l_1} \\{\gamma\vdash l_2}
    \end{subarray}} 
 c_{(1^{r-l_1-l_2  }),\sigma,\gamma}^{\lambda } c_{ \gamma,  \sigma',(1^{s-l_1-l_2})}^{\mu }   $$
for all $n \geq {\rm min}\{|\lambda|+|\mu| +1, |\mu|+\lambda_1+k, |\lambda|+\mu_1+k\}$ 
and where $\sigma'$ denotes the transpose of $\sigma$.
\end{cor}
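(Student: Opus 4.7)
The plan is to combine Corollary \ref{bigthm} (which expresses $\overline{g}^\nu_{\lambda,\mu}$ as a fourfold sum involving three Littlewood--Richardson coefficients and one Kronecker coefficient) with Corollary \ref{corol} (which replaces $\overline{g}$ by $g$ once $n$ is large enough), and to simplify the resulting sum drastically using the very special form of $\nu$.

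\textbf{Step 1: Reduce to the reduced Kronecker coefficient.} In both cases, the stated bound on $n$ matches $\min\{|\lambda|+|\mu|+\nu_1,\,|\lambda|+|\nu|+\mu_1,\,|\nu|+|\mu|+\lambda_1\}$ specialised to $\nu=(k)$ (giving $\nu_1=k$, $|\nu|=k$ and dropping the first term which is larger) and to $\nu=(1^k)$ (giving $\nu_1=1$, $|\nu|=k$). Hence by Corollary \ref{corol} it suffices in each case to verify the formula for $\overline{g}^\nu_{\lambda,\mu}$, and then invoke Corollary \ref{bigthm}.

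\textbf{Step 2: Kill most of the sum using the shape of $\nu$.} When $\nu=(k)$ is a single row, the three-fold Littlewood--Richardson coefficient $c^{(k)}_{\alpha,\beta,\pi}$ is nonzero only if each of $\alpha,\beta,\pi$ is itself a single row, in which case it equals $1$; this is immediate from the classical Littlewood--Richardson rule as recalled in Section 1 (any LR-filling of a one-row shape forces every piece to be a one-row). Combined with $|\alpha|=r-l_1-l_2$, $|\beta|=s-l_1-l_2$, $|\pi|=l_1$ this pins down $\alpha=(r-l_1-l_2)$, $\beta=(s-l_1-l_2)$, $\pi=(l_1)$. Analogously, if $\nu=(1^k)$, taking conjugates shows $c^{(1^k)}_{\alpha,\beta,\pi}=1$ when all three arguments are single columns and $0$ otherwise, forcing $\alpha=(1^{r-l_1-l_2})$, $\beta=(1^{s-l_1-l_2})$, $\pi=(1^{l_1})$.

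\textbf{Step 3: Evaluate the remaining Kronecker coefficient $g^\pi_{\rho,\sigma}$.} Since $\Sp((l_1))$ is the trivial $\mathfrak{S}_{l_1}$-module, $g^{(l_1)}_{\rho,\sigma}=\dim\Hom_{\mathfrak{S}_{l_1}}(\Sp(\rho),\Sp(\sigma))=\delta_{\rho,\sigma}$. Since $\Sp((1^{l_1}))$ is the sign representation, $\Sp(\rho)\otimes \Sp(\sigma)$ contains it with multiplicity $\delta_{\sigma,\rho'}$ (using $\Sp(\rho)\otimes\mathrm{sgn}\cong \Sp(\rho')$). Substituting these Kronecker deltas into Corollary \ref{bigthm} collapses the sums over $\rho$ and $\pi$; in case (i) one obtains precisely $\sum c^\lambda_{(r-l_1-l_2),\sigma,\gamma}\,c^\mu_{\gamma,\sigma,(s-l_1-l_2)}$, and in case (ii), after reindexing $\sigma\mapsto\sigma'$, exactly the stated $\sum c^\lambda_{(1^{r-l_1-l_2}),\sigma,\gamma}\,c^\mu_{\gamma,\sigma',(1^{s-l_1-l_2})}$.

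The essentially only subtle point is the identification of which triples $(\alpha,\beta,\pi)$ contribute in Step~2; everything else is a direct substitution. There is no computational obstacle to speak of, and indeed the proof fits in a few lines: it is the correct passage through the partition algebra machinery (Corollaries \ref{bigthm} and \ref{corol}) which does all the work.
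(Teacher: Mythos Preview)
Your proof is correct and follows essentially the same approach as the paper: apply Corollary~\ref{corol} to pass from $g$ to $\overline{g}$, then specialise Corollary~\ref{bigthm} using that $c^{(k)}_{\alpha,\beta,\pi}$ (respectively $c^{(1^k)}_{\alpha,\beta,\pi}$) forces $\alpha,\beta,\pi$ to be single rows (respectively columns) and that $g^{(l_1)}_{\rho,\sigma}=\delta_{\rho,\sigma}$, $g^{(1^{l_1})}_{\rho,\sigma}=\delta_{\rho,\sigma'}$. Your explicit verification that the stated bounds on $n$ agree with those in Corollary~\ref{corol} (in particular, that in case~(i) the term $|\lambda|+|\mu|+\nu_1$ may be dropped because $\mu_1\le|\mu|$ and $\lambda_1\le|\lambda|$) is a nice detail the paper leaves implicit.
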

\begin{proof}
Our assumption on $n$ implies that ${g}^{\nu_{[n]}}_{\lambda_{[n]},\mu_{[n]}}= \overline{g}^\nu_{\lambda,\mu}$ by Corollary \ref{corol}.

The result follows from Corollary \ref{bigthm}, noting that 
$c_{\alpha,\beta,\pi}^{(k)}$ (respectively $c_{\alpha,\beta,\pi}^{(1^k)}$) is zero unless $\alpha=(r-l_1-l_2)$, $\beta=(s-l_1-l_2), \pi = (l_1)$ (respectively $\alpha=(1^{r-l_1-l_2})$, $\beta=(1^{s-l_1-l_2}), \pi = (1^{l_1})$) in which case it is equal to 1
 and $g^{(l_1)}_{\rho,\sigma}$ (respectively $g^{(1^{l_1})}_{\rho,\sigma}$) is zero unless  $\rho=\sigma$ (respectively  $\rho=\sigma'$), in which case it is equal to 1.
\end{proof}

\begin{rmk}
In \cite{BallantineOrellanaEJC} they compute the Kronecker coefficients 
$$g_{(n-k,k),\lambda_{[n]}}^{\mu_{[n]}} = g_{\lambda_{[n]},\mu_{[n]}}^{(n-k,k)}$$ 
when $n-|\lambda|-\lambda_1\geq 2k$, equivalently
$$n \geq |\lambda|+\lambda_1+2k.$$
Noting that $k=|\mu|$ and for $\overline{g}^\nu_{\lambda,\mu}\neq0$, we must have that
 $|\mu|\leq |\lambda|+|\nu|$,
 we see that Corollary \ref{bigthmspecialcase2} improves this bound (as $|\mu|+\lambda_1+k \leq  |\lambda|+\lambda_1+2k)$.
\end{rmk}

\section{Example }

In this section, we shall  compute the tensor square of the Specht module, $\Sp(n-1,1)$ for $n\geq 2$, labelled by the first non-trivial hook, via the partition algebra.  We have that 
$$ \Hom_{\mathfrak{S}_n}(\Sp(\nu_{[n]}), \Sp(n-1,1)\otimes \Sp(n-1,1)) 
 \cong\Hom_{P_1(n)\otimes P_1(n)}(L_1(1) \otimes L_1(1), L_{2}(\nu)\!\!\downarrow)
 $$ if $\nu \in\Lambda_{\leq 2}$ and zero otherwise.
Therefore, it is enough to consider the restriction of simple modules from $P_2(n)$ to the Young subalgebra $P_1(n) \otimes P_1(n)$.

The partition algebra $P_2(n)$ is a 15-dimensional algebra with basis:
\begin{align*}
  \begin{minipage}{10mm}\begin{tikzpicture}[scale=0.4]
  \draw (0,0) rectangle (2,2);
  \foreach \x in {0.5,1.5}
    {\fill (\x,2) circle (2pt);
     \fill (\x,0) circle (2pt);}
    \begin{scope} 
    \draw (0.5,2) -- (0.5,0);
        \draw (1.5,2) -- (1.5,0);
   \end{scope}
\end{tikzpicture}\end{minipage} 
  \begin{minipage}{10mm}\begin{tikzpicture}[scale=0.4]
  \draw (0,0) rectangle (2,2);
  \foreach \x in {0.5,1.5}
    {\fill (\x,2) circle (2pt);
     \fill (\x,0) circle (2pt);}
    \begin{scope} 
    \draw (1.5,2) -- (0.5,0);
        \draw (1.5,0) -- (0.5,2);
   \end{scope}
\end{tikzpicture}\end{minipage}  
   \begin{minipage}{10mm}\begin{tikzpicture}[scale=0.4]
  \draw (0,0) rectangle (2,2);
  \foreach \x in {0.5,1.5}
    {\fill (\x,2) circle (2pt);
     \fill (\x,0) circle (2pt);}
    \begin{scope} 
    \draw (0.5,2) -- (0.5,0);
       \draw (0.5,2) arc (180:360:0.5 and 0.3);
              \draw (0.5,0) arc (180:360:0.5 and -0.3);
   \end{scope}
\end{tikzpicture}\end{minipage} 
 \begin{minipage}{10mm}\begin{tikzpicture}[scale=0.4]
  \draw (0,0) rectangle (2,2);
  \foreach \x in {0.5,1.5}
    {\fill (\x,2) circle (2pt);
     \fill (\x,0) circle (2pt);}
    \begin{scope} 
    \draw (0.5,2) -- (0.5,0);
               \draw (0.5,0) arc (180:360:0.5 and -0.3);
   \end{scope}
\end{tikzpicture}\end{minipage} 
  \begin{minipage}{10mm}\begin{tikzpicture}[scale=0.4]
  \draw (0,0) rectangle (2,2);
  \foreach \x in {0.5,1.5}
    {\fill (\x,2) circle (2pt);
     \fill (\x,0) circle (2pt);}
    \begin{scope} 
    \draw (1.5,2) -- (0.5,0);
               \draw (0.5,0) arc (180:360:0.5 and -0.3);
   \end{scope}
\end{tikzpicture}\end{minipage} 
   \begin{minipage}{10mm}\begin{tikzpicture}[scale=0.4]
  \draw (0,0) rectangle (2,2);
  \foreach \x in {0.5,1.5}
    {\fill (\x,2) circle (2pt);
     \fill (\x,0) circle (2pt);}
    \begin{scope}
    \draw (0.5,2) -- (1.5,0);
       \draw (0.5,2) arc (180:360:0.5 and 0.3);
   \end{scope}
\end{tikzpicture}\end{minipage} 
     \begin{minipage}{10mm}\begin{tikzpicture}[scale=0.4]
  \draw (0,0) rectangle (2,2);
  \foreach \x in {0.5,1.5}
    {\fill (\x,2) circle (2pt);
     \fill (\x,0) circle (2pt);}
    \begin{scope}
    \draw (0.5,2) -- (1.5,0);
   \end{scope}
\end{tikzpicture}\end{minipage} 
  \begin{minipage}{10mm}\begin{tikzpicture}[scale=0.4]
  \draw (0,0) rectangle (2,2);
  \foreach \x in {0.5,1.5}
    {\fill (\x,2) circle (2pt);
     \fill (\x,0) circle (2pt);}
    \begin{scope}
    \draw (1.5,2) -- (1.5,0);
   \end{scope}
\end{tikzpicture}\end{minipage} 
\\  \begin{minipage}{10mm}\begin{tikzpicture}[scale=0.4]
  \draw (0,0) rectangle (2,2);
  \foreach \x in {0.5,1.5}
    {\fill (\x,2) circle (2pt);
     \fill (\x,0) circle (2pt);}
    \begin{scope}
    \draw (0.5,2) -- (0.5,0);
   \end{scope}
\end{tikzpicture}\end{minipage}
 \begin{minipage}{10mm}\begin{tikzpicture}[scale=0.4]
  \draw (0,0) rectangle (2,2);
  \foreach \x in {0.5,1.5}
    {\fill (\x,2) circle (2pt);
     \fill (\x,0) circle (2pt);}
    \begin{scope}
    \draw (0.5,2) -- (0.5,0);
       \draw (0.5,2) arc (180:360:0.5 and 0.3);
   \end{scope}
\end{tikzpicture}\end{minipage} 
 \begin{minipage}{10mm}\begin{tikzpicture}[scale=0.4]
  \draw (0,0) rectangle (2,2);
  \foreach \x in {0.5,1.5}
    {\fill (\x,2) circle (2pt);
     \fill (\x,0) circle (2pt);}
    \begin{scope}
    \draw (1.5,2) -- (0.5,0);
     \end{scope}
\end{tikzpicture}\end{minipage}   
  \begin{minipage}{10mm}\begin{tikzpicture}[scale=0.4]
  \draw (0,0) rectangle (2,2);
  \foreach \x in {0.5,1.5}
    {\fill (\x,2) circle (2pt);
     \fill (\x,0) circle (2pt);}
    \begin{scope}
         \draw (0.5,2) arc (180:360:0.5 and 0.3);
              \draw (0.5,0) arc (180:360:0.5 and -0.3);
   \end{scope}
\end{tikzpicture}\end{minipage}  
   \begin{minipage}{10mm}\begin{tikzpicture}[scale=0.4]
  \draw (0,0) rectangle (2,2);
  \foreach \x in {0.5,1.5}
    {\fill (\x,2) circle (2pt);
     \fill (\x,0) circle (2pt);}
    \begin{scope}
               \draw (0.5,0) arc (180:360:0.5 and -0.3);
   \end{scope}
\end{tikzpicture}\end{minipage}   
  \begin{minipage}{10mm}\begin{tikzpicture}[scale=0.4]
  \draw (0,0) rectangle (2,2);
  \foreach \x in {0.5,1.5}
    {\fill (\x,2) circle (2pt);
     \fill (\x,0) circle (2pt);}
    \begin{scope} 
         \draw (0.5,2) arc (180:360:0.5 and 0.3);
   \end{scope}
\end{tikzpicture}\end{minipage}
  \begin{minipage}{10mm}\begin{tikzpicture}[scale=0.4]
  \draw (0,0) rectangle (2,2);
  \foreach \x in {0.5,1.5}
    {\fill (\x,2) circle (2pt);
     \fill (\x,0) circle (2pt);}
    \begin{scope}
   \end{scope}
\end{tikzpicture}\end{minipage} 
  \end{align*}
and multiplication defined by concatenation.  For example:
\begin{align*}
  \begin{minipage}{10mm}
  \begin{tikzpicture}[scale=0.4]
  \draw (0,0) rectangle (2,2);
  \foreach \x in {0.5,1.5}
    {\fill (\x,2) circle (2pt);
     \fill (\x,0) circle (2pt);}
    \begin{scope} 
    \draw (1.5,2) -- (0.5,0);
        \draw (1.5,0) -- (0.5,2);
    \end{scope}
    \draw (0,0) rectangle (2,-2);
  \foreach \x in {0.5,1.5}
    {\fill (\x,-2) circle (2pt);
     \fill (\x,0) circle (2pt);}
    \begin{scope} 
    \draw (0.5,-2) -- (0.5,0);
               \draw (0.5,-2) arc (180:360:0.5 and -0.3);
   \end{scope}
\end{tikzpicture}\end{minipage} =
 \begin{minipage}{10mm}\begin{tikzpicture}[scale=0.4]
  \draw (0,0) rectangle (2,2);
  \foreach \x in {0.5,1.5}
    {\fill (\x,2) circle (2pt);
     \fill (\x,0) circle (2pt);}
    \begin{scope} 
    \draw (1.5,2) -- (0.5,0);
              \draw (0.5,0) arc (180:360:0.5 and -0.3);
   \end{scope}
\end{tikzpicture}\end{minipage} ,
\quad\quad 
 \begin{minipage}{10mm}\begin{tikzpicture}[scale=0.4]
  \draw (0,0) rectangle (2,2);
  \foreach \x in {0.5,1.5}
    {\fill (\x,2) circle (2pt);
     \fill (\x,0) circle (2pt);}
    \begin{scope} 
    \draw (0.5,2) -- (0.5,0);
   \end{scope}
   \draw (0,0) rectangle (2,-2);
  \foreach \x in {0.5,1.5}
    {\fill (\x,-2) circle (2pt);
     \fill (\x,0) circle (2pt);}
    \begin{scope} 
    \draw (0.5,0) -- (1.5,-2);
       \draw (0.5,2) arc (180:360:0.5 and 0.3);
   \end{scope}
\end{tikzpicture}\end{minipage} = n\ \!
 \ \begin{minipage}{10mm}\begin{tikzpicture}[scale=0.4]
  \draw (0,0) rectangle (2,2);
  \foreach \x in {0.5,1.5}
    {\fill (\x,2) circle (2pt);
     \fill (\x,0) circle (2pt);}
    \begin{scope} 
    \draw (0.5,2) -- (1.5,0);
 \draw (0.5,2) arc (180:360:0.5 and 0.3);
   \end{scope}
\end{tikzpicture}\end{minipage} 
\end{align*}

There are four standard modules corresponding to the partitions of degree less than or equal to $2$; these are obtained by inflating the Specht modules from the symmetric groups of degree $0,1,2$.  These modules have bases:

$$\begin{array}{llll}
  &\Delta_2(2)= {\rm Span}_{\mathbb{C}} \left\{ \, \begin{minipage}{8.5mm}\begin{tikzpicture}[scale=0.4]
  \draw (0,0) rectangle (2,2);
  \foreach \x in {0.5,1.5}
    {\fill (\x,2) circle (2pt);
     \fill (\x,0) circle (2pt);}
    \begin{scope} 
    \draw (0.5,2) -- (0.5,0);
        \draw (1.5,2) -- (1.5,0);
   \end{scope}
\end{tikzpicture}\end{minipage} +
  \begin{minipage}{8.5mm}\begin{tikzpicture}[scale=0.4]
  \draw (0,0) rectangle (2,2);
  \foreach \x in {0.5,1.5}
    {\fill (\x,2) circle (2pt);
     \fill (\x,0) circle (2pt);}
    \begin{scope} 
    \draw (1.5,2) -- (0.5,0);
        \draw (1.5,0) -- (0.5,2);
   \end{scope} 
\end{tikzpicture}\end{minipage}\, \right\}  
  &\Delta_2(1^2) = {\rm Span}_{\mathbb{C}} 
\left\{  \, \begin{minipage}{8.5mm}\begin{tikzpicture}[scale=0.4]
  \draw (0,0) rectangle (2,2);
  \foreach \x in {0.5,1.5}
    {\fill (\x,2) circle (2pt);
     \fill (\x,0) circle (2pt);}
    \begin{scope} 
    \draw (0.5,2) -- (0.5,0);
        \draw (1.5,2) -- (1.5,0);
   \end{scope}
\end{tikzpicture}\end{minipage} -
  \begin{minipage}{8.5mm}\begin{tikzpicture}[scale=0.4]
  \draw (0,0) rectangle (2,2);
  \foreach \x in {0.5,1.5}
    {\fill (\x,2) circle (2pt);
     \fill (\x,0) circle (2pt);}
    \begin{scope} 
    \draw (1.5,2) -- (0.5,0);
        \draw (1.5,0) -- (0.5,2);
   \end{scope}
\end{tikzpicture}\end{minipage} \,\right\} \\
\end{array}
$$

$$
\begin{array}{llll}
  &\Delta_2(1) =  {\rm Span}_{\mathbb{C}} \left\{\, \begin{minipage}{8.5mm}\begin{tikzpicture}[scale=0.4]
  \draw (0,0) rectangle (2,2);
  \foreach \x in {0.5,1.5}
    {\fill (\x,2) circle (2pt);
     \fill (\x,0) circle (2pt);}
    \begin{scope} 
    \draw (0.5,2) -- (0.5,0);
       \draw (0.5,2) arc (180:360:0.5 and 0.3);
    \end{scope}
\end{tikzpicture}\end{minipage} ,
 \begin{minipage}{8.5mm}\begin{tikzpicture}[scale=0.4]
  \draw (0,0) rectangle (2,2);
  \foreach \x in {0.5,1.5}
    {\fill (\x,2) circle (2pt);
     \fill (\x,0) circle (2pt);}
    \begin{scope}
    \draw (1.5,2) -- (0.5,0);
    \end{scope}
\end{tikzpicture}\end{minipage} ,
 \begin{minipage}{8.5mm}\begin{tikzpicture}[scale=0.4]
  \draw (0,0) rectangle (2,2);
  \foreach \x in {0.5,1.5}
    {\fill (\x,2) circle (2pt);
     \fill (\x,0) circle (2pt);}
    \begin{scope} 
    \draw (0.5,2) -- (0.5,0);
   \end{scope}
\end{tikzpicture}\end{minipage}\, \right\}
     &\Delta_2(\emptyset) =   {\rm Span}_{\mathbb{C}}\left\{\,\begin{minipage}{8.5mm}\begin{tikzpicture}[scale=0.4]
  \draw (0,0) rectangle (2,2);
  \foreach \x in {0.5,1.5}
    {\fill (\x,2) circle (2pt);
     \fill (\x,0) circle (2pt);}
    \begin{scope} 
         \draw (0.5,2) arc (180:360:0.5 and 0.3);
          \draw (0.5,0) arc (180:360:0.5 and -0.3);
    \end{scope}
\end{tikzpicture}\end{minipage} ,
  \begin{minipage}{8.5mm}\begin{tikzpicture}[scale=0.4]
  \draw (0,0) rectangle (2,2);
  \foreach \x in {0.5,1.5}
    {\fill (\x,2) circle (2pt);
     \fill (\x,0) circle (2pt);}
    \begin{scope} 
          \draw (0.5,0) arc (180:360:0.5 and -0.3);
   \end{scope}
\end{tikzpicture}\end{minipage}\,\right\}
  \end{array}$$

The action of $P_2(n)$ is given by concatenation.  If the resulting diagram has fewer propagating lines than the original, we set the product equal to zero.
The algebra $P_1(n)\otimes P_1(n)$ is the 4-dimensional subalgebra spanned by the diagrams with no lines crossing an imagined vertical wall down the centre of the diagram. 
The restriction of the standard modules to this subalgebra  is  as follows:
$$\Delta_2(2)\!\!\downarrow_{P_1\otimes P_1}\cong \Delta_1(1) \boxtimes  \Delta_1(1),\quad  \Delta_2(1^2)\!\!\downarrow_{P_1\otimes P_1}\cong \Delta_1(1) \boxtimes  \Delta_1(1),$$
$$\Delta_2(1)\!\!\downarrow_{P_1\otimes P_1}\cong \Delta_1(1) \boxtimes  \Delta_1(1) \oplus \Delta_1(\emptyset) \boxtimes  \Delta_1(1) \oplus \Delta_1(1) \boxtimes  \Delta_1(\emptyset),$$
$$\Delta_2(\emptyset)\!\!\downarrow_{P_1\otimes P_1}\cong \Delta_1(1) \boxtimes  \Delta_1(1) \oplus \Delta_1(\emptyset) \boxtimes  \Delta_1(\emptyset) .$$
In particular, note that $\bar{g}_{(1),(1)}^\nu = [\Delta_2(\nu)\!\!\downarrow_{P_1\otimes P_1}:\Delta_1(1)\boxtimes\Delta_1(1)]=1$ for $\nu = \emptyset,1,1^2,2$.   

The partition algebra $P_2(n)$ is semisimple for $n>2$.  For $\nu=\emptyset, (1), (1^2)$ or $(2)$ we have that $\nu_{[n]}=(n), (n-1,1), (n-2,1^2),$ or $(n-2,2)$ and $\nu_{[n]}$ is  a partition for $n\geq 0,2,3,4$ respectively.  Therefore the  Kronecker coefficients $$g^{\nu_{[n]}}_{(n-1,1),(n-1,1)} $$ stabilise for $n\geq 4$ and are non-zero for $n\geq 4$ if and only $\nu_{[n]}$ is one of the partitions above.

Now consider the case $n=2$.  Neither $\nu=(1^2),$ nor $(2)$ correspond to partitions of 2, we therefore consider $\nu=\emptyset$ and $(1)$.  We have that $(1) \subset (2)$  is the unique $2$-pair of partitions of degree less than or equal to 2 (see Section 3.3). Therefore the only standard $P_2(2)$-module which is not simple is $\Delta_2(1)$ and we have an exact sequence
$$0\to L_2(2) \to \Delta_2(1) \to L_2(1) \to 0.$$
Thus in the Grothendieck group we have that  $[L_2(1)]=[\Delta_2(1)]-[\Delta_2(2)]$.  Hence, we have that $[L_2(1)\!\!\downarrow_{P_1(2)\otimes P_1(2)}:L_1(1) \boxtimes L_1(1)]=0$.  We conclude that $g_{(1^2), (1^2)}^{(1^2)}=0$ and $g_{(1^2),(1^2)}^{(2)}=1$ as expected.

\section*{Acknowledgements}
The authors wish to thank David Speyer for pointing out a crucial error in an earlier version of Theorem \ref{partition} and Corollary \ref{bigthm}.
M. De Visscher and R. Orellana thank Georgia Benkart, Monica Vazirani and Stephanie van Willigenburg and the Banff International Research Station for providing support and a stimulating environment during the Algebraic Combinatorixx workshop where this project started. 
C. Bowman and R. Orellana are grateful for the financial support received from the ANR and NSF grants ANR-10-BLAN-0110 and DMS-1101740, respectively. 

\bibliographystyle{amsalpha} \bibliography{book3}

\end{document}